\documentclass[12pt,a4paper]{article}
\usepackage{amssymb}
\usepackage{amsmath}
\usepackage{amsthm}
\usepackage{indentfirst}
\usepackage{ot1patch}
\usepackage{psfrag}
\usepackage{rotate}
\usepackage{hyperref}

\theoremstyle{plain}
\newtheorem{tw}{Theorem}[section]
\newtheorem{pr}[tw]{Proposition}
\newtheorem{lm}[tw]{Lemma}
\newtheorem{cor}[tw]{Corollary}

\theoremstyle{definition}

\theoremstyle{remark}
\newtheorem{ex}[tw]{Example}

\newtheorem{rem}[tw]{Remark}

\DeclareMathOperator{\calA}{{\cal A}}
\DeclareMathOperator{\calS}{{\cal S}}
\DeclareMathOperator{\calP}{{\cal P}}
\DeclareMathOperator{\calR}{{\cal R}}
\DeclareMathOperator{\sfF}{\text{\sf F}}
\DeclareMathOperator{\sfq}{\text{\sf q}}

\newcommand{\rpr}{\mathbin{\mbox{\small\rm rpr}}}

\author{Piotr J\k{e}drzejewicz, Miko\l aj Marciniak,\\
\L ukasz Matysiak, Janusz Zieli\'nski\\
\normalsize Faculty of Mathematics and Computer Science\\
\normalsize Nicolaus Copernicus University\\
\normalsize Toru\'{n}, Poland}
\title{On properties of square-free elements\\
in commutative cancellative monoids}
\date{}

\begin{document}

\maketitle

\begin{abstract}
We discuss various square-free factorizations in monoids
in the context of: atomicity, ascending chain condition
for principal ideals, decomposition, and a greatest common
divisor property.
Moreover, we obtain a full characterization of submonoids
of factorial monoids in which all square-free elements
of a submonoid are square-free in a monoid.
We also present factorial properties implying that all atoms
of a submonoid are square-free in a monoid.
\end{abstract}

\section{Introduction}
\label{s1}

Throughout this paper by a monoid we mean a commutative cancellative
monoid. 
We adopt the notation from \cite{GeroldingerHalterKoch}.

\medskip

Let $H$ be a monoid.
We denote by $H^{\times}$ the group of all invertible elements
of $H$.
Two elements $a,b\in H$ are called relatively prime if they have
no common non-invertible divisors, what we denote by $a\rpr b$.
The set of all atoms in $H$ will be denoted by $\calA (H)$.
Recall that an element $a\in H$ is called square-free
if it cannot be presented in the form $a = b^2c$,
where $b, c\in H$ and $b\not\in H^{\times}$.
The set of all square-free elements in $H$ we will denote by
$\calS (H)$.

\medskip

The main motivation of this paper is connected with the following
two properties concerning a submonoid $M\subset H$.
The first one is that all atoms of $M$ are square-free in $H$:
\begin{itemize}
\item[(1.1)] \
$\calA (M)\subset \calS(H)$.
\end{itemize}
The second one is that all square-free elements of $M$ are
square-free in $H$:
\begin{itemize}
\item[(1.2)] \
$\calS (M)\subset \calS(H)$.
\end{itemize}
These properties are related to the famous Jacobian conjecture
(for details see Section \ref{s2}).

\medskip

If $H$ is a factorial monoid and a submonoid $M\subset H$ satisfies
$M^{\times}=H^{\times}$ and $\sfq(M)\cap H=M$, then condition (1.2)
can be expressed in a factorial way
(see \cite{JZanalogs}, Theorem 3.4 -- formulated in terms of rings,
but in fact valid for monoids):
\begin{itemize}
\item[(1.3)]
for every $a\in H$, $b\in\calS(H)$, if $a^2b\in M$, then $a,b\in M$.
\end{itemize}

\medskip

Recall also (see \cite{JZanalogs}, Theorem 3.6) that under these
assumptions a submonoid $M$ satisfying (1.2) is root closed in $H$.
Recently Angerm\"uller showed in \cite{Angermuller}, Proposition 32,
that under the same assumptions a submonoid $M$ satisfying (1.1)
is root closed in $H$.
A submonoid $M\subset H$ is called root closed in $H$ if, for every
$a\in H$ and $n\geq 1$, $a^n\in M$ implies $a\in M$.

\medskip

Recall two questions concerning the conditions (1.1) and (1.2)
in the case of a UFD, stated in \cite{JZapproach}.
We have asked if they are equivalent under some natural assumptions
(like $M^{\times}=H^{\times}$), and if not, can the condition (1.1)
be expressed in a form of factoriality, similarly to (1.3)?

\medskip

In Section \ref{s4} we present a factorial property implying (1.1),
weaker than (1.3), namely:
\begin{itemize}
\item[(1.4)]
for every $a\in H$, $b\in\calS(H)$, if $a^2b\in M$, then $a,ab\in M$.
\end{itemize}
In Theorem \ref{t43} we show that property (1.4) has natural
equivalent forms with respect to various square-free factorizations.

\medskip

In Theorem \ref{t51} we obtain full description of submonoids of
a factorial monoid, satisfying (1.2), as factorial submonoids
generated (up to irreducibles) by any set of pairwise relatively
prime non-invertible square-free elements.
We also obtain the answer to a question, when (1.1) and (1.2)
are equivalent, expressing (1.2) as a conjunction of (1.1)
and the property that any two non-associated atoms of $M$
are relatively prime in $H$.
Moreover, we refer in Theorem \ref{t51} to various square-free
factorizations, in particular equivalence between (1.2) and (1.3)
holds without the assumption $\sfq(M)\cap H=M$.

\medskip

Section \ref{s6} is devoted to properties of radical elements.
Reinhart in \cite{Reinhart} introduced the notions of radical element
and radical factoriality of a monoid.
An element $a\in H$ is called radical if its principal ideal $aH$ is
a radical ideal.
A monoid $H$ is called radical factorial if every element is a product
of radical elements.
As we already observed in \cite{JMZproperties}, Lemma 3.2 b),
every radical element is square-free.
So we have the following diagram of relations on elements of a monoid:
\begin{itemize}
\item[(1.5)] \hfill
$\begin{array}{ccc}
\text{prime}&\Rightarrow &\text{atom} \\
\Downarrow & & \Downarrow \\
\text{radical}&\Rightarrow &\text{square-free} \\
\end{array}$ \hfill $\,$
\end{itemize}
A radical element is an analog of a square-free one in the same way
as a prime element is an analog of an atom.
Moreover, a radical element is a generalization of a prime
in the same way as a square-free element is a generalization of
an atom.

\medskip

How these analogies and generalizations work, we show
in Section~\ref{s6}.
In Propositions \ref{p65} -- \ref{p67} we study the uniqueness
of factorizations.
In Proposition \ref{p64} we prove that in a decomposition monoid
all square-free elements are radical.
Recall that a monoid $H$ is called a decomposition monoid
if every element $a\in H$ is primal, that is,
for every $b,c\in H$ such that $a\mid bc$ there exist
$a_1,a_2\in H$ such that $a=a_1a_2$, $a_1\mid b$ and $a_2\mid c$.
A domain $R$ is pre-Schreier if the multiplicative monoid
$R \setminus \{0\}$ is a decomposition monoid.
The notion of a pre-Schreier domain was introduced by Zafrullah
in \cite{Zafrullah}, see also \cite{BrookfieldRush} and the references
given there.

\medskip

In Sections \ref{s2} and \ref{s7} we discuss square-free
factorizations in monoids in the context of the following
properties: atomicity, ACCP, decomposition, GCD.
We collect all relationships in Proposition \ref{p34}.
This is a generalization and extension of Proposition 1 from
\cite{JMZfactorizations}.
In Section \ref{s7} we consider possible classifications of monoids
with respect to square-free factorizations and we state questions
about existence of monoids.
Some examples are presented in Section \ref{s8}.

\medskip

We refer to the following diagram of relations of monoids:
\renewcommand{\arraycolsep}{4pt}
\renewcommand{\arraystretch}{0.2}
\begin{itemize}
\item[(1.6)] \hfill
$\begin{array}{rcl}
&& \text{BF}\;\;\Rightarrow\;\;\text{ACCP}\;\;\Rightarrow
\;\;\text{atomic} \\
& \mbox{\begin{psfrags}\rotatebox{30}{$\Rightarrow$}\end{psfrags}}
 & \\
\text{factorial} && \\
& \mbox{\begin{psfrags}\rotatebox{-30}{$\Rightarrow$}\end{psfrags}}
 & \\
&& \text{GCD}\;\;\Rightarrow\;\;\text{decomposition}\;\;\Rightarrow
\;\;\text{atoms are primes}
\end{array}$ \hfill $\,$
\end{itemize}
\renewcommand{\arraystretch}{1.2}
Remember that
\begin{itemize}
\item[(1.7)] \hfill
$\text{atomic}\;\wedge\;\text{atoms are primes}\;\;\Rightarrow\;\;
\text{factorial}$ \hfill $\,$
\end{itemize}

\medskip

Finally, in Section \ref{s9} we concern a natural question about
the possible number of square-free elements in a monoid.

\section{Connections with the Jacobian conjecture}
\label{s2}

The Jacobian conjecture, stated by Keller (\cite{Keller}) in 1939
is one of the most important open problems stimulating modern
mathematical research (see \cite{Smale}), with long lists of
false proofs and equivalent formulations.
For more information we refer the reader to van den Essen's book
\cite{vandenEssen}.

\medskip

\noindent
{\bf Jacobian conjecture.}
Let $k$ be a field of characteristic $0$.
For every polynomials $f_1,\dots,f_n\in k[x_1,\dots,x_n]$
with $n\geq 2$, if
\begin{itemize}
\item[(2.1)] \hfill
$\left|\begin{array}{ccc}
\dfrac{\partial f_1}{\partial x_1}&\cdots&
\dfrac{\partial f_1}{\partial x_n}\\
\vdots&&\vdots\\
\dfrac{\partial f_n}{\partial x_1}&\cdots&
\dfrac{\partial f_n}{\partial x_n}
\end{array}\right|\in k\setminus\{0\}$, \hfill $\,$
\end{itemize}
then $k[f_1,\dots,f_n]=k[x_1,\dots,x_n]$.

\medskip

Now, we will describe some topics of an approach to the conjecture
in terms of irreducibility and square-freeness.
For more details we refer the reader to our survey article
\cite{JZapproach}.

\medskip

Under the assumption that $f_1,\dots,f_n$ are algebraically
independent over~$k$, the Jacobian condition (2.1) is equivalent
to any of the following ones
(\cite{BondtYan}, \cite{Jkeller}, \cite{JZanalogs}):
\begin{itemize}
\item[(2.2)]
every atom of $k[f_1,\dots,f_n]$ is square-free
in $k[x_1,\dots,x_n]$,
\item[(2.3)]
every square-free element of $k[f_1,\dots,f_n]$ is square-free
in $k[x_1,\dots,x_n]$.
\end{itemize}
Under the same assumption, the assertion of the conjecture:
$k[f_1,\dots,f_n]=k[x_1,\dots,x_n]$ is equivalent to the following
one (\cite{Bakalarski}, \cite{Adjamagbo}, \cite{Jkeller}):
\begin{itemize}
\item[(2.4)]
every atom of $k[f_1,\dots,f_n]$ is an atom of $k[x_1,\dots,x_n]$.
\end{itemize}
Hence, in particular, the existence of a non-trivial example
for (2.2), where by "non-trivial" we mean "not satisfying (2.4)",
is equivalent to the negation of the Jacobian conjecture.

\medskip

Recall a generalization of the Jacobian conjecture formulated
in \cite{JZanalogs}.

\medskip

\noindent {\bf Conjecture.}
Let $k$ be a field of characteristic $0$.
For every polynomials $f_1,\dots,f_r\in k[x_1,\dots,x_n]$
with $n\geq 2$ and $r\in\{2,\dots,n\}$, if
\begin{itemize}
\item[(2.5)] \hfill
$\gcd\big(\left|\begin{array}{ccc}
\dfrac{\partial f_1}{\partial x_{j_1}}&\cdots&
\dfrac{\partial f_1}{\partial x_{j_r}}\\
\vdots&&\vdots\\
\dfrac{\partial f_r}{\partial x_{j_1}}&\cdots&
\dfrac{\partial f_r}{\partial x_{j_r}}
\end{array}\right|,\;
1\leq j_1<\ldots<j_r\leq n\big)
\in k\setminus\{0\}$, \hfill $\,$
\end{itemize}
then $k[f_1,\dots,f_r]$ is algebraically closed
in $k[x_1,\dots,x_n]$.

\medskip

By Nowicki's characterization (\cite{Nrings}, Theorem~5.5,
\cite{Polder}, Theorem~4.1.5, \cite{DaigleBook}, 1.4)
the assertion above is equivalent to: "$R$ is a ring of constants
for some $k$-derivation of $k[x_1,\dots,x_n]$".

\medskip

Under the assumption that $f_1,\dots,f_r$ are algebraically
independent over~$k$, the generalized Jacobian condition (2.5)
is equivalent to any of the following ones~(\cite{JZanalogs}):
\begin{itemize}
\item[(2.6)]
every atom of $k[f_1,\dots,f_r]$ is square-free
in $k[x_1,\dots,x_n]$,
\item[(2.7)]
every square-free element of $k[f_1,\dots,f_r]$ is square-free
in $k[x_1,\dots,x_n]$.
\end{itemize}

\section{Square-free factorizations in monoids}
\label{s3}

The aim of this section is to recall and extend some observations
from \cite{JMZfactorizations}.
The statements in that paper were formulated for rings,
but the arguments are valid for monoids, since we were working
only with the multiplicative structure of rings.
In particular, Lemma 1 and Lemma 2 e)
of \cite{JMZfactorizations} take the following form.

\begin{lm}
\label{l31}
Let $H$ be a monoid.
If $a\in\calS (H)$ and $a=b_1b_2\dots b_n$, then
$b_1$, $b_2$, $\dots$, $b_n\in\calS (H)$ and $b_i\rpr b_j$ for
$i\neq j$.
\end{lm}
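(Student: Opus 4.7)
The plan is to establish both conclusions by contraposition: if either conclusion failed, one could exhibit a non-invertible square dividing $a$, contradicting $a\in\calS(H)$. Commutativity of $H$ is used throughout to rearrange products freely.

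For the first conclusion, that each $b_i$ is square-free in $H$, I would argue as follows. Suppose, toward contradiction, that some $b_i=c^2d$ with $c,d\in H$ and $c\notin H^{\times}$. Substituting into $a=b_1b_2\cdots b_n$ and collecting factors gives
\[
a \;=\; c^2\cdot\Bigl(d\prod_{j\neq i}b_j\Bigr),
\]
so $a$ admits a representation $a=c^2e$ with $c\notin H^{\times}$, contradicting $a\in\calS(H)$. Hence each $b_i\in\calS(H)$.

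For the second conclusion, suppose $b_i$ and $b_j$ (with $i\neq j$) have a common non-invertible divisor $c\in H\setminus H^{\times}$. Write $b_i=ce_i$ and $b_j=ce_j$ for some $e_i,e_j\in H$. Substituting into the product and again using commutativity,
\[
a \;=\; c^2\cdot\Bigl(e_ie_j\prod_{k\neq i,j}b_k\Bigr),
\]
which once more contradicts $a\in\calS(H)$. Thus no such $c$ exists, and $b_i\rpr b_j$ for all $i\neq j$.

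The argument is essentially mechanical, and I do not expect a serious obstacle. The only mild subtlety is ensuring that the empty product conventions are handled correctly when $n$ is small (e.g.\ $n=1$, where square-freeness of $b_1$ is immediate and the relative primeness clause is vacuous), and that commutativity is invoked to regroup the factors; both points are routine under the paper's standing hypothesis that $H$ is a commutative cancellative monoid.
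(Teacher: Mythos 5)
Your proof is correct and is essentially the standard argument (the paper itself defers to Lemma 1 of the cited earlier work, whose proof is exactly this: a failure of either conclusion exhibits a non-invertible square factor of $a$). No issues.
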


\begin{lm}
\label{l32}
Let $H$ be a decomposition monoid.
If $a_1,\dots,a_n\in\calS (H)$ and $a_i\rpr a_j$ for all $i\neq j$,
then $a_1\ldots a_n\in\calS (H)$.
\end{lm}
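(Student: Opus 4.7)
The plan is to proceed by induction on $n$, with the genuine content in the case $n=2$ and a routine coprimality argument feeding the induction step.

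For the base case $n=2$, suppose for contradiction that $a_1 a_2 = b^2 c$ with $b \notin H^{\times}$. Since $H$ is a decomposition monoid, $b$ is primal and $b \mid a_1 a_2$ gives a factorization $b = b_1 b_2$ with $b_1 \mid a_1$ and $b_2 \mid a_2$; at least one $b_i$ is non-invertible, say $b_1$. Writing $a_1 = b_1 s_1$, $a_2 = b_2 s_2$ and substituting into $a_1 a_2 = b^2 c = b_1^2 b_2^2 c$, cancellativity yields $s_1 s_2 = b_1 b_2 c$, so $b_1 \mid s_1 s_2$. Applying primality to $b_1$ gives $b_1 = u_1 v_1$ with $u_1 \mid s_1$ and $v_1 \mid s_2$; but then $v_1 \mid a_1$ and $v_1 \mid a_2$, whence $v_1 \in H^\times$ by $a_1 \rpr a_2$. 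Hence (up to a unit) $b_1 \mid s_1$ and therefore $b_1^2 \mid a_1$, contradicting $a_1 \in \calS(H)$.

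For the induction step, suppose the statement holds for $n-1$ and set $A = a_1 \cdots a_{n-1}$. The induction hypothesis gives $A \in \calS(H)$, so it remains to verify $A \rpr a_n$ and then invoke the case $n=2$ applied to $A$ and $a_n$. To show coprimality, let $d$ be a common divisor of $A$ and $a_n$. Iterated primality (which follows by an easy induction from the defining property of a primal element) gives $d = d_1 \cdots d_{n-1}$ with $d_i \mid a_i$; since also $d_i \mid d \mid a_n$, the pairwise coprimality $a_i \rpr a_n$ forces each $d_i \in H^\times$, hence $d \in H^\times$.

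The only real obstacle is the base case: one has to squeeze a square $b_1^2 \mid a_1$ out of the weaker information $b^2 \mid a_1 a_2$, and the trick is to apply primality twice, once to split $b$ across $a_1,a_2$ and once to split $b_1$ across $s_1, s_2$, each time using $a_1 \rpr a_2$ to kill the "wrong" factor. Everything else is bookkeeping.
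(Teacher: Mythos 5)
Your proof is correct, and it follows essentially the same route as the source the paper cites for this lemma (\cite{JMZfactorizations}, Lemma 2 e)): use primality to split the offending $b$ across the factors, then use pairwise coprimality to force the square into a single $a_i$, contradicting its square-freeness. Your second application of primality to $b_1$ is just an inline proof of the coprime-cancellation fact (Lemma \ref{l63} a)) that the cited argument invokes as a lemma, and your coprimality step in the induction is exactly Lemma \ref{l63} b); nothing is missing.
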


As an immediate consequence we obtain.

\begin{cor}
\label{c33}
If $H$ is a decomposition monoid and $a_1,\dots,a_n\in\calA (H)$,
$a_i\nsim a_j$ for $i\neq j$, then $a_1\ldots a_n\in\calS (H)$.
\end{cor}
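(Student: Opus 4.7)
The plan is to verify the two hypotheses of Lemma \ref{l32} for the tuple $a_1,\dots,a_n$ and then invoke that lemma directly; the decomposition hypothesis on $H$ is carried over unchanged.

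First, I would show that every atom of $H$ is square-free. Suppose $a\in\calA(H)$ and, for contradiction, $a=b^2c$ with $b\notin H^{\times}$. Rewrite this as $a=b\cdot(bc)$; since $a$ is an atom, one of the factors must be invertible. Because $b\notin H^{\times}$, the factor $bc$ must be invertible, and then $b$ itself divides a unit, forcing $b\in H^{\times}$, a contradiction. Hence $a_1,\dots,a_n\in\calS(H)$.

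Second, I would check pairwise relative primeness. Fix $i\neq j$ and let $d$ be a common divisor of $a_i$ and $a_j$ in $H$. Since $a_i$ is an atom and $d\mid a_i$, either $d\in H^{\times}$ or $d\sim a_i$; similarly, either $d\in H^{\times}$ or $d\sim a_j$. If $d\notin H^{\times}$, then $a_i\sim d\sim a_j$, contradicting $a_i\nsim a_j$. Therefore every common divisor of $a_i$ and $a_j$ is invertible, i.e., $a_i\rpr a_j$.

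With both hypotheses verified, Lemma \ref{l32} applied to $a_1,\dots,a_n$ yields $a_1\ldots a_n\in\calS(H)$. The only non-routine ingredient was Lemma \ref{l32}, which in turn relies on the decomposition hypothesis; the rest is immediate from the definitions of atom, square-free element, and relative primeness, so I do not anticipate any substantive obstacle.
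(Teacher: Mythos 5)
Your proof is correct and follows exactly the route the paper intends: the corollary is stated there as an immediate consequence of Lemma \ref{l32}, obtained by observing that atoms are square-free and that non-associated atoms are relatively prime, which is precisely what you verify. Both of your verifications are sound, so there is nothing to add.
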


\medskip

In \cite{JMZfactorizations}, Proposition 1, we considered three
types of square-free factorizations -- (ii), (iii), (iv)
in Proposition \ref{p34} below.
In \cite{JMZfactorizations} we did not consider condition denoted
(i) below as a separate one, as well as atomicity implying it.
Moreover, we considered in \cite{JMZfactorizations}, Proposition 1,
only one type of square-free extraction -- (vi) in Proposition
\ref{p34} below.
Here we add a second type of square-free extraction -- (v)
as easily following from (ii) for an arbitrary monoid.
Finally, implications ${\rm (vi)}\Rightarrow {\rm (ii)}$ and
${\rm (vi)}\Rightarrow {\rm (iv)}$ in \cite{JMZfactorizations},
Proposition 1 b) were formulated for GCD-domains, but the proofs
were based only on \cite{JMZfactorizations}, Lemma 2 e).
This is why implications ${\rm (iii)}\Rightarrow {\rm (ii)}$ and
${\rm (iii)}\Rightarrow {\rm (iv)}$ below hold for arbitrary
decomposition monoids.

\renewcommand{\arraycolsep}{4pt}

\begin{pr}
\label{p34}
Let $H$ be a monoid.
Consider the following conditions:

\medskip

\noindent
{\rm (i)} \
for every $a\in H$ there exist $n\geq 1$ and
$s_1,s_2,\dots,s_n\in\calS (H)$ such that $a=s_1s_2\dots s_n$,

\medskip

\noindent
{\rm (ii)} \
for every $a\in H$ there exist $n\geq 1$ and
$s_1,s_2,\dots,s_n\in\calS (H)$ such that $s_i\mid s_{i+1}$
for $i=1,\dots,n-1$, and $a=s_1s_2\dots s_n$,

\medskip

\noindent
{\rm (iii)} \
for every $a\in H$ there exist $n\geq 1$ and
$s_1,s_2,\dots,s_n\in\calS (H)$ such that $s_i\rpr s_j$
for $i\neq j$, and $a=s_1s_2^2s_3^3\ldots s_n^n$,

\medskip

\noindent
{\rm (iv)} \
for every $a\in H$ there exist $n\geq 0$ and
$s_0,s_1,\dots,s_n\in\calS (H)$ such that
$a=s_0s_1^2s_2^{2^2}\ldots s_n^{2^n}$,

\medskip

\noindent
{\rm (v)} \
for every $a\in H$ there exist $b\in H$ and $c\in\calS (H)$
such that $a=bc$ and $a\mid c^n$ for some $n\geq 1$,

\medskip

\noindent
{\rm (vi)} \
for every $a\in H$ there exist $b\in H$ and $c\in\calS (H)$
such that $a=b^2c$.

\medskip

\noindent
{\bf a)}
The following implications hold:
$$\begin{array}{ccccccc}
&&{\rm (i)}&\Leftarrow &\text{\rm atm} &\Leftarrow &\text{\rm ACCP}
 \\
&&\mbox{\raisebox{-3pt}{$\Uparrow$}} &
\mbox{\begin{psfrags}\rotatebox{-45}{$\Leftarrow$}\end{psfrags}}
&&
\mbox{\begin{psfrags}\rotatebox{-135}{$\Rightarrow$}\end{psfrags}}
& \\
{\rm (ii)}&\Rightarrow &{\rm (iii)}&&{\rm (iv)}&& \\
\Downarrow &&&&\Downarrow && \\
{\rm (v)}&&&&{\rm (vi)}&&
\end{array}$$

\medskip

\noindent
{\bf b)}
If $H$ is a decomposition monoid, then
$$\begin{array}{ccccc}
{\rm (ii)}&\Leftrightarrow &{\rm (iii)}&\Rightarrow &{\rm (iv)}.
\end{array}$$

\medskip

\noindent
{\bf c)}
If $H$ is a GCD-monoid, then
$$\begin{array}{ccccc}
{\rm (ii)}&\Leftrightarrow &{\rm (iii)}&\Leftrightarrow &{\rm (iv)}.
\end{array}$$
\end{pr}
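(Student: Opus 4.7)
The plan is to handle part (a) by elementary rearrangements, invoking Lemma \ref{l31} for the key step (ii) $\Rightarrow$ (iii) and an iterated square-extraction for ACCP $\Rightarrow$ (iv); to deduce both implications in part (b) from Lemma \ref{l32}; and to concentrate effort on the only genuinely new implication, namely (iv) $\Rightarrow$ (iii) in a GCD-monoid, appearing in part (c).

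For part (a), most implications are immediate: atomicity $\Rightarrow$ (i) because atoms are square-free, ACCP $\Rightarrow$ atomicity is the classical chain argument, and (ii) $\Rightarrow$ (i), (iii) $\Rightarrow$ (i), (iv) $\Rightarrow$ (i), (iv) $\Rightarrow$ (vi), (ii) $\Rightarrow$ (v) all follow by direct regrouping (for instance, for (ii) $\Rightarrow$ (v) take $c = s_n$ and $b = s_1 \cdots s_{n-1}$, so that $a \mid s_n^n = c^n$). The two substantive routine steps are (ii) $\Rightarrow$ (iii) and ACCP $\Rightarrow$ (iv). For (ii) $\Rightarrow$ (iii), set $r_j = s_j/s_{j-1}$ (with $s_0 = 1$); Lemma \ref{l31} applied to the square-free $s_j = r_1 \cdots r_j$ makes the $r_j$'s pairwise coprime square-free, and telescoping gives $a = \prod_j r_j^{n-j+1}$, which is (iii) after reindexing $u_k = r_{n-k+1}$. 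For ACCP $\Rightarrow$ (iv), I iterate the square extraction: a standard ACCP argument produces $a = b_0^2 c_0$ with $c_0 \in \calS(H)$, applying the same to $b_0, b_1, \ldots$ yields partial factorisations $a = c_0 c_1^2 \cdots c_k^{2^k} b_k^{2^{k+1}}$, and the ascending chain $b_0 H \subseteq b_1 H \subseteq \cdots$ stabilises by ACCP, eventually making $b_k$ a unit (absorbed into the leading square-free factor).

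For part (b), Lemma \ref{l32} guarantees that in a decomposition monoid a product of pairwise coprime square-free elements is again square-free. Hence (iii) $\Rightarrow$ (ii) follows by setting $v_k = s_{n-k+1} s_{n-k+2} \cdots s_n$: each $v_k$ is square-free by Lemma \ref{l32}, $v_1 \mid v_2 \mid \cdots \mid v_n$, and $\prod_k v_k = \prod_i s_i^i = a$. For (iii) $\Rightarrow$ (iv), expand each exponent in binary as $k = \sum_j \epsilon_{k,j} 2^j$ and regroup: $a = \prod_j r_j^{2^j}$ with $r_j = \prod_{k:\epsilon_{k,j}=1} s_k$, each $r_j$ again square-free by Lemma \ref{l32}.

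The main obstacle is part (c): (iv) $\Rightarrow$ (iii) in a GCD-monoid. My approach rests on a joint gcd-refinement lemma, which I would establish separately: for any finite family $s_0, \ldots, s_n \in \calS(H)$ in a GCD-monoid there exist pairwise coprime $q_I \in \calS(H)$, indexed by the non-empty subsets $I \subseteq \{0, \ldots, n\}$, such that $s_i = \prod_{I \ni i} q_I$ for each $i$. The base case $n = 1$ is obtained from $d = \gcd(s_0, s_1)$ together with $s_0 = d \cdot (s_0/d)$, $s_1 = d \cdot (s_1/d)$; pairwise coprimality of $d$, $s_0/d$, $s_1/d$ follows by combining the defining property of $d$ with Lemma \ref{l31}. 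The inductive step splits each previously constructed $q_J$ against $s_n$ via $d_J = \gcd(q_J, s_n)$ and $q_J/d_J$, then extracts the residual $s_n / \prod_J d_J$; coprimality of the new family reduces to repeated applications of the $n = 1$ case. Given the refinement, $a = \prod_i s_i^{2^i} = \prod_I q_I^{\sum_{i \in I} 2^i}$; since the exponents $\sum_{i \in I} 2^i$ are pairwise distinct by uniqueness of binary representation, setting $t_k = q_I$ when $k = \sum_{i \in I} 2^i$ (and $t_k = 1$ otherwise) yields $a = t_1 t_2^2 \cdots t_M^M$ in the form (iii). The delicate point is maintaining coprimality through the inductive refinement, which is precisely why the argument needs the full strength of the GCD hypothesis rather than mere decomposition.
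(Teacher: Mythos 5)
Your proposal is correct and follows essentially the same route as the paper's (which is deferred to \cite{JMZfactorizations}, Proposition 1, but whose constructions are reused explicitly in the proof of Theorem \ref{t43}): telescoping quotients plus Lemma \ref{l31} for (ii) $\Rightarrow$ (iii), cumulative products and binary expansion of exponents via Lemma \ref{l32} for part b), and, for (iv) $\Rightarrow$ (iii) in a GCD-monoid, the pairwise-coprime refinement of $s_0,\dots,s_n$ indexed by nonempty subsets with exponents $\sum_{i\in I}2^i$ --- exactly the $m=2^{n+1}-1$ factorization the paper invokes. The only steps you compress are the ACCP chain arguments for (vi) and then (iv), but the sketch you give is the standard one and is sound.
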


Note that, according to {\rm (v)}, under the assumption $a=bc$
the condition "$a\mid c^n$ for some $n\geq 1$"
is equivalent to "$b\mid c^n$ for some $n\geq 1$".

\medskip

Recall that every radical element is square-free
(\cite{JMZproperties}, Lemma 3.2 b),
so radical factorial monoids studied by Reinhart in \cite{Reinhart}
satisfy condition {\rm (i)}.

\begin{rem}
\label{r35}
The statement that there are (in general) no other implications
than the ones stated above is equivalent to the existence
of the following counter-examples.
\begin{enumerate}
\item
Non-factorial GCD-monoids satisfying:
$$\text{\rm (i)}\wedge\neg\text{\rm (v)},\;
\text{\rm (i)}\wedge\neg\text{\rm (vi)},\;
\text{\rm (v)}\wedge\neg\text{\rm (i)},\;
\text{\rm (v)}\wedge\neg\text{\rm (vi)},\;
\text{\rm (vi)}\wedge\neg\text{\rm (i)},\;
\text{\rm (vi)}\wedge\neg\text{\rm (v)}.$$
\item
A decomposition non-GCD monoid satisfying
$\text{\rm (iv)}\wedge\neg\text{\rm (v)}$.
\item
Non-decomposition monoids satisfying: \
$\text{\rm (ii)}\wedge\neg\text{\rm (vi)}$, \
$\text{\rm (iii)}\wedge\neg\text{\rm (v)}$.
\item
Non-factorial ACCP-monoids satisfying: \
$\neg\text{\rm (iii)}$, \ $\neg\text{\rm (v)}$.
\item
An atomic non-ACCP monoid satisfying
$\neg\text{\rm (vi)}$.
\item
A non-atomic monoid satisfying
$\text{\rm (ii)}$.
\end{enumerate}
\end{rem}

\section{Sufficient conditions for ${\cal A}(M)\subset {\cal S}(H)$}
\label{s4}

In this section we study a factorial property (1.4) implying that
all atoms of a submonoid are square-free in a monoid.
We show that this property is, in general, not a necessary one.
However, it is interesting by itself since it has natural
equivalent forms with respect to several square-free factorizations,
what we obtain in Theorem \ref{t43}.

\begin{pr}
\label{p41}
Let $H$ be a monoid satisfying condition {\rm (vi)}
of Proposition~{\rm\ref{p34}}.
Let $M$ be a submonoid of $H$ such that for every $a\in H$,
$b\in \calS (H)$,
$$a^2b\in M\Rightarrow a,ab\in M.$$
Then $\calA (M)\subset \calS (H)$.
\end{pr}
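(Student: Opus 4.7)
The plan is to start with an arbitrary atom $x\in\calA(M)$ and produce a square-free factorization of $x$ in $H$ in which the squared part is forced to be invertible. First, I would invoke hypothesis (vi) on $H$ to write
\[
x = b^2 c, \qquad b\in H,\ c\in\calS(H).
\]
Since $x\in M$, the displayed equation is an instance of $a^2 b\in M$ (with $a=b$ and the role of $b$ played by $c\in\calS(H)$), so the submonoid hypothesis yields $b\in M$ and $bc\in M$.

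Next I would exploit atomicity of $x$ in $M$. Inside $M$ we have the factorization $x = b\cdot(bc)$ with both factors in $M$, and $x\in\calA(M)$, so one of $b,bc$ lies in $M^{\times}$. The key observation is that $M^{\times}\subseteq H^{\times}$ (if an element is invertible in a submonoid, the same inverse works in $H$), and moreover $H^{\times}$ is closed under taking divisors in $H$. So in the first case $b\in H^{\times}$, hence $x=b^2c$ is associated in $H$ to the square-free element $c$, which forces $x\in\calS(H)$. In the second case $bc\in H^{\times}$, which forces both $b$ and $c$ to be in $H^{\times}$, so $x\in H^{\times}$; and invertible elements are trivially square-free (any writing $x=y^2z$ with $x\in H^\times$ forces $y\in H^\times$). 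Either way, $x\in\calS(H)$, as required.

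I do not anticipate a serious obstacle: the only subtle point is the dichotomy coming from atomicity, where one has to be careful that being invertible in $M$ transfers to being invertible in $H$ (without needing the assumption $M^{\times}=H^{\times}$) and that the invertible case of $x$ is handled separately and not overlooked. The proof uses condition (vi) exactly once, to produce the initial decomposition $x=b^2c$, and then the hypothesis on $M$ exactly once to pull $b$ and $bc$ into $M$. No factorial or GCD structure on $H$ is required beyond (vi), which matches the generality of the statement.
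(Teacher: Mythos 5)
Your proposal is correct and follows essentially the same route as the paper's proof: apply condition (vi) to the atom to get $x=b^2c$ with $c\in\calS(H)$, use the hypothesis on $M$ to conclude $b,bc\in M$, and then exploit atomicity of $x=b\cdot(bc)$ in $M$. The paper merely phrases this as a contradiction (assuming $x\notin\calS(H)$ forces $b\notin H^{\times}$, so both factors are non-units of $M$), while you run the same argument directly via the unit dichotomy; the content is identical.
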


\begin{proof}
Suppose that there exists some $c\in \calA (M)$ such that
$c\notin \calS (H)$.
Then $c=a^2b$ for some $a\in H$, $b\in \calS (H)$.
Since $a^2b\in M$, then $a,ab\in M$.
Note that $a\notin H^{\times}$, because $c\notin \calS (H)$,
so $a,ab\notin M^{\times}$, a contradiction.
\end{proof}

The converse implication is not valid:

\begin{ex}
\label{e42}
Consider a monoid $H={\mathbb N}^3$ and its submonoid
$M=\langle (1,1,0),$ $(1,0,1) \rangle$.
Then $\calA (M)=\{(1,1,0),(1,0,1)\}$, so $\calA (M)\subset \calS (H)$,
but for $a=(1,0,0)\in H$, $b=(0,1,1)\in \calS (H)$ we have
$2a+b\in M$ and $a,a+b\notin M$.
\end{ex}

Observe that in the above example the monoid $M$ satisfies
$\sfq(M)\cap H=M$, and under this condition properties
(1.3) and (1.4) are equivalent.

\medskip

The most difficult part of Theorem's \ref{t43} proof is
the connection between ${\rm (i)}\Leftrightarrow {\rm (ii)}$ and
${\rm (iii)}\Leftrightarrow {\rm (iv)}\Leftrightarrow {\rm (v)}$,
i.e.\ the equivalence of {\rm (ii)} and {\rm (iii)}.

\begin{tw}
\label{t43}
Let $H$ be a factorial monoid.
Let $M\subset H$ be a submonoid such that $M^{\times}=H^{\times}$.
The following conditions are equivalent:

\medskip

\noindent
{\rm (i)} \
for every $a\in H$ and $b\in\calS (H)$,
$$a^2b\in M\Rightarrow a,ab\in M,$$

\medskip

\noindent
{\rm (ii)} \
for every $n\geq 0$ and $s_0, s_1, \dots , s_n\in \calS (H)$,
$$s_0s_1^2s_2^{2^2}\ldots s_n^{2^n}\!\!\in \!M\Rightarrow
s_is_{i+1}s_{i+2}^2s_{i+3}^{2^2}\ldots s_n^{2^{n-i-1}}\!\!\in \!M,
i=0,\ldots ,n-1, \mbox{and}\; s_n\!\in \!M,$$

\medskip

\noindent
{\rm (iii)} \
for every $n\geq 1$ and $s_1, s_2, \dots , s_n\in \calS (H)$
such that $s_i\rpr_H s_j$ for $i\neq j$,
$$s_1s_2^2s_3^3\ldots s_n^n\in M\Rightarrow s_n, s_{n-1}s_n,
s_{n-2}s_{n-1}s_n, \ldots , s_1s_2\ldots s_n\in M,$$

\medskip

\noindent
{\rm (iv)} \
for every $n\geq 1$ and $s_1, s_2, \dots , s_n\in \calS (H)$
such that $s_i\mid s_{i+1}$ for $i=1, \ldots , n-1$,
$$s_1s_2\ldots s_n\in M\Rightarrow s_{1}, s_2,\ldots , s_n\in M,$$

\medskip

\noindent
{\rm (v)} \
for every $a\in H$ and $b\in\calS (H)$ such that $a\mid b^n$
for some $n\geq 1$,
$$ab\in M\Rightarrow a,b\in M.$$
\end{tw}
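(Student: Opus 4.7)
The proof splits into three blocks mirroring the remark just preceding the theorem: (i) $\Leftrightarrow$ (ii), (iii) $\Leftrightarrow$ (iv) $\Leftrightarrow$ (v), and the bridge (ii) $\Leftrightarrow$ (iii); the first two blocks reduce to iteration and reindexing, while the third is the main technical step. For Block A, (i) $\Rightarrow$ (ii) goes by iteration: setting $A_k := s_k s_{k+1}^2 s_{k+2}^4 \cdots s_n^{2^{n-k}}$, one has $A_0$ equal to the given element, $A_n = s_n$, and $A_k = s_k \cdot A_{k+1}^2$ with $s_k \in \calS(H)$, so repeated application of (i) yields $A_{k+1} \in M$ and the desired subproduct $s_k A_{k+1} \in M$. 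Conversely, for (ii) $\Rightarrow$ (i), apply (ii) to the canonical binary decomposition $a^2 b = v_0 v_1^2 \cdots v_m^{2^m}$ (with $v_0 = b$ and $v_{j+1}$ the $j$-th binary digit of $a$); then $ab$ is exactly the $i = 0$ subproduct, and introducing $a^{(k)} := v_{k+1} v_{k+2}^2 \cdots v_m^{2^{m-k-1}}$ (so $a^{(0)} = a$ and $a^{(m-1)} = v_m$) the identity $a^{(k)} = S_{k+1} \cdot a^{(k+1)}$ drives a backward induction from $v_m \in M$ to $a \in M$.

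For Block B, the substitutions $t_i := s_i/s_{i-1}$ (with $s_0 := 1$) and $S_j := \prod_{k \geq j} s_k$ are mutually inverse bijections between the data of (iv) and that of (iii), so (iii) $\Leftrightarrow$ (iv) is direct. For (v) $\Rightarrow$ (iv), the factorization $s_1 \cdots s_n = (s_1 \cdots s_{n-1}) \cdot s_n$ satisfies $s_1 \cdots s_{n-1} \mid s_n^{n-1}$ (since each $s_i \mid s_n$), and induction on $n$ finishes. For (iv) $\Rightarrow$ (v), given $ab \in M$ with $b \in \calS(H)$ and $a \mid b^n$, decompose $a$ into its divisibility-chain square-free parts $a = a_1 \cdots a_e$, $a_i := \prod_{e_p(a) \geq i} p$; since $a \mid b^n$ and $b$ is square-free, $a_1 \mid b$, so $ab = a_1 \cdots a_e \cdot b$ is a divisibility chain of square-free elements in $M$, and (iv) extracts each factor.

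The main obstacle is Block C, the bridge (ii) $\Leftrightarrow$ (iii). Given an element with binary decomposition $s_0 s_1^2 \cdots s_n^{2^n}$ (the $s_i$ not necessarily coprime), its triangular decomposition is $\prod_k r_k^k$, where $r_k := \prod_{K(p) = k} p$ and $K(p) := \sum_{i : p \mid s_i} 2^i$; these $r_k$ are pairwise coprime square-free. A direct exponent computation shows that the exponent of $r_k$ in the subproduct $S_i$ from (ii) is $e_k^{(i)} := \bigl\lceil \lfloor k/2^i \rfloor / 2 \bigr\rceil$, non-decreasing in $k$. For (iii) $\Rightarrow$ (ii), (iii) applied to the triangular form yields every tail $T_j := \prod_{k \geq j} r_k \in M$; then $s_n = T_{2^n}$ and $S_i = \prod_j T_j^{e_j^{(i)} - e_{j-1}^{(i)}}$ is a product with nonnegative multiplicities of elements of $M$, so $S_i \in M$. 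For (ii) $\Rightarrow$ (iii), induct on the maximal exponent $n$: placing $s_1 s_2^2 \cdots s_n^n \in M$ in binary form and applying (ii) yields $u_N \in M$ with $N := \lfloor \log_2 n \rfloor$ (already the tail $\prod_{k \geq 2^N} s_k$) together with each $S_i \in M$; each $S_i$ is itself of triangular form with strictly smaller maximal exponent, so the inductive hypothesis furnishes its tails, which are tails of the original product starting at indices of the form $(2\ell - 1) \cdot 2^i$. The unique factorization of every $k \in \{1, \ldots, n\}$ as an odd number times a power of $2$ shows these cover every required starting index. The crux is the bookkeeping: monotonicity of $e_k^{(i)}$ keeps the product-of-tails expression nonnegative in the first direction, and the odd-times-$2$-power covering argument guarantees exhaustiveness in the second.
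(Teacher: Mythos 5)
Your proposal is correct, and its overall architecture coincides with the paper's: the same three blocks, with (i)$\Leftrightarrow$(ii) by iteration/telescoping, (iii)$\Leftrightarrow$(iv)$\Leftrightarrow$(v) by the divisor-chain/coprime-parts dictionary and the radical argument $s_m\mid b$, and the bridge (ii)$\Leftrightarrow$(iii) singled out as the crux via the binary-versus-triangular conversion $K(p)=\sum_{i:\,p\mid s_i}2^i$. Where you genuinely diverge is in how the bridge is executed. For (iii)$\Rightarrow$(ii) the paper runs an induction on $n$, converting to a triangular form with $2^{n+2}-1$ coprime parts and multiplying the tails over odd and even starting indices to peel off one binary layer at a time; you instead observe that the exponent of $r_k$ in the target subproduct $S_i$ is $\lceil\lfloor k/2^i\rfloor/2\rceil$, monotone in $k$, so a single application of (iii) followed by Abel summation writes each $S_i$ as a product of tails with nonnegative multiplicities --- no induction at all, which is a real simplification. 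For (ii)$\Rightarrow$(iii) the paper proves a halving lemma ($\prod s_k^k\in M$ gives $\prod s_k^{\lceil k/2\rceil}$ and $\prod s_k^{\lfloor k/2\rfloor}$ in $M$), iterates it to reach the radical and its cofactor, and then inducts on $n$ by appending $s_{n+1}$; you induct on the maximal exponent, recursing on each $S_i$ (whose triangular form has maximal exponent $\le\lceil n/2\rceil$) and using the unique factorization $j=(2\ell-1)2^i$ to check that the tails so produced exhaust all starting indices $1,\dots,n$. Both organizations rest on the same halving idea, but yours trades the paper's explicit ceiling-exponent bookkeeping in Step II for the odd-times-power-of-two covering argument; the one point you should make explicit is that the grouped factors $\sigma_v=\prod_{k:\,e_k^{(i)}=v}s_k$ of each $S_i$ are again pairwise relatively prime and square-free (Lemmas \ref{l32} and \ref{l63}), so that the inductive hypothesis really applies. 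Minor glossed details --- the index reversal in the (iii)$\Leftrightarrow$(iv) dictionary, unit factors absorbed by $M^{\times}=H^{\times}$, and trivial parts $r_k\in H^{\times}$ being square-free and coprime to everything --- are all harmless.
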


\begin{proof}
\noindent
$\text{\rm (i)}\Rightarrow \text{\rm (ii)}$\\
Assume {\rm (i)}.
Consider elements $s_0, \dots , s_n\in\calS (H)$ such that
$s_0s_1^2s_2^{2^2}\ldots s_n^{2^n}\in M$.
Since $\big(s_1s_2^2s_3^{2^2}\ldots s_n^{2^{n-1}}\big)^2s_0\in M$,
from {\rm (i)} we obtain
$$s_1s_2^2s_3^{2^2}\ldots s_n^{2^{n-1}},\;\; \big(s_1s_2^2s_3^{2^2}
\ldots s_n^{2^{n-1}}\big)s_0 \in M.$$
Then, since
$\big(s_2s_3^2s_4^{2^2}\ldots s_n^{2^{n-2}}\big)^2s_1\in M$,
from {\rm (i)} we obtain
$$s_2s_3^2s_4^{2^2}\ldots s_n^{2^{n-2}},\;\; \big(s_2s_3^2s_4^{2^2}
\ldots s_n^{2^{n-2}}\big)s_1 \in M.$$
Continuing, finally we receive:
$$\big(s_1s_2^2s_3^{2^2}\ldots s_n^{2^{n-1}} \big) s_0,\:
\big(s_2s_3^2s_4^{2^2}\ldots s_n^{2^{n-2}} \big)s_1,
\ldots ,\: s_{n-1}s_n^2s_{n-2},\: s_ns_{n-1},\: s_n\in M.$$

\smallskip

\noindent
$\text{\rm (ii)}\Rightarrow \text{\rm (i)}$\\
Assume {\rm (ii)}.
Consider $a\in H, b\in\calS (H)$ such that $a^2b\in M$.
We can express $a$ in the form
$a=s_1s_2^2s_3^{2^2}\ldots s_n^{2^{n-1}}$,
where $s_i\in\calS (H)$ for $i=1, \ldots , n$.
Put $s_0=b$.
Thus we receive:
$$s_0s_1^2s_2^{2^2}\ldots s_n^{2^n}=a^2b\in M.$$
Using the assumption we obtain:
$$s_0s_1s_2^2s_3^{2^2}\ldots s_n^{2^{n-1}},\;
s_1s_2s_3^2s_4^{2^2}\ldots s_n^{2^{n-2}}, \ldots ,\;
s_{n-2}s_{n-1}s_n^2,\; s_{n-1}s_n,\; s_n\in M.$$
We see that $ab=s_0s_1s_2^2s_3^{2^2}\ldots s_n^{2^{n-1}}\in M$.
Moreover:
$$a=s_n(s_{n-1}s_n)\big(s_{n-2}s_{n-1}s_n^2\big)
\big(s_{n-3}s_{n-2}s_{n-1}^2s_n^{2^2}\big)\ldots
\big(s_1s_2s_3^2s_4^{2^2}\ldots s_n^{2^{n-2}}\big)\in M.$$

\smallskip

\noindent
$\text{\rm (ii)}\Rightarrow \text{\rm (iii)}$\\
Assume (ii).
We write ${\lceil x \rceil }$ and ${\lfloor x \rfloor }$
for respectively the ceiling and the floor of a real number $x$.

\medskip

Step I.
{\em If $s_1s_2^2s_3^3\ldots s_n^n\in M$, where
$s_1,\ldots,s_n\in \calS (H)$, $s_i\rpr_H s_j$ for $i\neq j$,
then $s_1s_2s_3^2s_4^2\ldots s_n^{\lceil \frac{n}{2} \rceil },\;
s_2s_3s_4^2s_5^2\ldots s_n^{\lfloor \frac{n}{2} \rfloor }\in M.$}

\medskip

Let $a=s_1s_2^2s_3^3\ldots s_n^n\in M$, where
$s_1,\ldots,s_n\in \calS (H)$, $s_i\rpr_H s_j$ for $i\neq j$.
Then the element $a$ can be presented in the form
$a=t_0t_1^2t_2^{2^2}\ldots t_r^{2^r}$, where
$t_i=s_1^{c^{(1)}_i}\ldots s_n^{c^{(n)}_i}\in \calS (H)$,
$i=0,\ldots , r$ and $k=\sum_{i=0}^r c_i^{(k)}2^i$
with $c_i^{(k)}\in \{0,1\}$, $k=0,1,\ldots ,n$
(see the proof of {\rm (vi)}$\Rightarrow${\rm (ii)}
in \cite{JMZfactorizations}, Proposition 1).
From {\rm (ii)} we get
$t_it_{i+1}t_{i+2}^2t_{i+3}^{2^2}\ldots t_r^{2^{r-i-1}}\in M$,
$i=0,\ldots ,r-1$ and $t_r\in M$.
In particular, $t_0t_1t_2^2\ldots t_r^{2^{r-1}}\in M$.
Moreover:
$$t_1t_2^2t_3^{2^2}\ldots t_r^{2^{r-1}}=
\left(\prod_{i=0}^{r-1}t_it_{i+1}t_{i+2}^2t_{i+3}^{2^2}\ldots
t_r^{2^{r-i-1}}\right)t_r\in M.$$
By the definition of $c^{(j)}_i$, we have
$$s_1s_2s_3^2s_4^2\ldots s_n^{\lceil \frac{n}{2} \rceil }=
t_0t_1t_2^2\ldots t_r^{2^{r-1}}\in M,$$
$$s_2s_3s_4^2s_5^2\ldots s_n^{\lfloor \frac{n}{2} \rfloor }=
t_1t_2^2t_3^{2^2}\ldots t_r^{2^{r-1}}\in M.$$

\smallskip

Step II.
{\em If $s_1s_2^2s_3^3\ldots s_n^n\in M$, where
$s_1,\ldots,s_n\in \calS (H)$, $s_i\rpr_H s_j$ for $i\neq j$,
then $s_1s_2s_3\ldots s_n,\; s_2s_3^2s_4^3\ldots s_n^{n-1}\in M.$}

\medskip

Assume that $s_1s_2^2s_3^3\ldots s_n^n\in M$, where
$s_1,\ldots,s_n\in \calS (H)$, $s_i\rpr_H s_j$ for $i\neq j$.
We prove by induction on $l$ that
$$s_1^{\lceil \frac{1}{2^l} \rceil }
s_2^{\lceil \frac{2}{2^l} \rceil }\ldots
s_{n-1}^{\lceil \frac{n-1}{2^l} \rceil }
s_n^{\lceil \frac{n}{2^l} \rceil },\;\;
s_1^{1-\lceil \frac{1}{2^l} \rceil }
s_2^{2-\lceil \frac{2}{2^l} \rceil }\ldots
s_{n-1}^{n-1-\lceil \frac{n-1}{2^l} \rceil }
s_n^{n-\lceil \frac{n}{2^l} \rceil } \in M.$$
Put $q={\lceil \frac{n}{2^r} \rceil }$.
Then $(q-1)2^r<n\leq q2^r$.
Put $s_i'=s_{(i-1)2^r+1}s_{(i-1)2^r+2}\ldots s_{i2^r}$ for
$i=1,\ldots ,q-1$ and $s_q'=s_{(q-1)2^r+1}s_{(q-1)2^r+2}\ldots s_n$.
Note that $s_1',s_2',\ldots , s_q'\in \calS (H)$ and
$s_i'\rpr_H s_j'$ for $i\neq j$, because
$s_1,\ldots,s_n\in \calS (H)$, $s_i\rpr_H s_j$ for $i\neq j$.
We have
$s_1^{\lceil \frac{1}{2^l} \rceil }
s_2^{\lceil \frac{2}{2^l} \rceil }\ldots
s_{n-1}^{\lceil \frac{n-1}{2^l} \rceil }
s_n^{\lceil \frac{n}{2^l} \rceil }=
s_1'(s_2')^2\ldots (s_q')^q.$
If
$s_1^{\lceil \frac{1}{2^l} \rceil }
s_2^{\lceil \frac{2}{2^l} \rceil }\ldots
s_{n-1}^{\lceil \frac{n-1}{2^l} \rceil }
s_n^{\lceil \frac{n}{2^l} \rceil }\in M$,
then by step I we obtain that
$$s_1^{\lceil \frac{1}{2^{l+1}} \rceil }
s_2^{\lceil \frac{2}{2^{l+1}} \rceil }\ldots
s_{n-1}^{\lceil \frac{n-1}{2^{l+1}} \rceil }
s_n^{\lceil \frac{n}{2^{l+1}} \rceil }
=s_1's_2'(s_3')^2(s_4')^2\ldots (s_q')^{\lceil \frac{q}{2} \rceil }
\in M$$
and
$$s_1^{\lceil \frac{1}{2^l} \rceil -\lceil \frac{1}{2^{l+1}} \rceil }
s_2^{\lceil \frac{2}{2^l} \rceil -\lceil \frac{2}{2^{l+1}} \rceil }
\ldots
s_n^{\lceil \frac{n}{2^l} \rceil -\lceil \frac{n}{2^{l+1}} \rceil }
=s_2's_3'(s_4')^2(s_5')^2\ldots (s_q')^{\lfloor \frac{q}{2} \rfloor }
\in M.$$
If moreover
$s_1^{1-\lceil \frac{1}{2^l} \rceil }
s_2^{2-\lceil \frac{2}{2^l} \rceil }\ldots
s_n^{n-\lceil \frac{n}{2^l} \rceil }\in M$,
then also
$$s_1^{1-\lceil \frac{1}{2^{l+1}} \rceil }
s_2^{2-\lceil \frac{2}{2^{l+1}} \rceil }\ldots
s_n^{n-\lceil \frac{n}{2^{l+1}} \rceil }=$$
$$s_1^{1-\lceil \frac{1}{2^l} \rceil }
s_2^{2-\lceil \frac{2}{2^l} \rceil }\ldots
s_n^{n-\lceil \frac{n}{2^l} \rceil }\cdot
s_1^{\lceil \frac{1}{2^l} \rceil -\lceil \frac{1}{2^{l+1}} \rceil }
s_2^{\lceil \frac{2}{2^l} \rceil -\lceil \frac{2}{2^{l+1}} \rceil }
\ldots
s_n^{\lceil \frac{n}{2^l} \rceil -\lceil \frac{n}{2^{l+1}} \rceil }
\in M.$$

\smallskip

There exists $r \in \mathbb{N}$ such that $2^r > n$.
Then for every $1\leq t\leq n$ we have
$\lceil \frac{t}{2^{r}} \rceil=1$.
Consequently,
$s_1s_2s_3\ldots s_n,\; s_2s_3^2s_4^3\ldots s_n^{n-1}\in M$.

\medskip

Step III.
We prove {\rm (iii)} by induction on $n$.
For $n=1$ it is clear.
Assume the assertion for $n$ and consider
$s_1, s_2, \dots , s_n, s_{n+1}\in \calS (H)$,
$s_i\rpr_H s_j$ for $i\neq j$, such that
$s_1s_2^2s_3^3\ldots s_n^ns_{n+1}^{n+1}\in M$.
By step II we have
$$s_1s_2s_3\ldots s_ns_{n+1}, s_2s_3^2s_4^3\ldots s_n^{n-1}s_{n+1}^n
\in M.$$
Then by the inductive assumption we have
$$s_{n+1}, s_ns_{n+1}, s_{n-1}s_ns_{n+1}, \ldots ,
s_2s_3\ldots s_ns_{n+1}\in M.$$

\smallskip

\noindent
$\text{\rm (iii)}\Rightarrow \text{\rm (ii)}$\\
Assume (iii).
We prove (ii) by induction on $n$.
For $n=0$ it is clear.

\medskip

We assume the assertion for $n$, that is,
if $s_0,s_1,\ldots,s_n \in \calS (M)$,
then $s_0s_1^2s_2^{2^2}\ldots s_n^{2^n}\in M$
implies $s_{n-l}\prod\limits_{j=0}^{l-1} s_{n-l+j+1}^{2^{j}} \in M$
for every $l \in \{0, 1, \ldots , n\}$.

\medskip

We prove the assertion for $n+1$.
Let $a=s_0s_1^2s_2^{2^2}\ldots s_{n+1}^{2^{n+1}}\in M$, where
$s_0,s_1,\ldots ,s_{n+1}\in \calS (H)$.
Then the element $a$ can be presented in the form
$a=t_1t_2^2t_3^3\ldots t_m^m$, where $m=2^{n+2}-1$ and
$t_1,\ldots , t_m\in \calS (H)$, $t_i\rpr_H t_j$, for $i\neq j$
(for details see the proof of ${\rm (ii)} \Rightarrow {\rm (vi)}$
in \cite{JMZfactorizations}, Proposition 1b).
From (iii) we have $t_m, t_{m-1}t_m,\ldots , t_1t_2\ldots t_m \in M$.
Note that $m$ is odd.
Multiplying the elements of the form $t_rt_{r+1}\ldots t_m$
for all odd $r$ we obtain
$t_1t_2t_3^2t_4^2\ldots t_m^{\lceil \frac{m}{2} \rceil}\in M$.
Multiplying the elements of that form for all even $r$ we obtain
$t_2t_3t_4^2t_5^2\ldots t_m^{\lfloor \frac{m}{2} \rfloor}\in M$.
Since
$t_2t_3t_4^2t_5^2\ldots t_m^{\lfloor \frac{m}{2} \rfloor}=
s_1s_2^2s_3^3\ldots s_{n+1}^{2^n}$,
by the inductive assumption we have
$s_{n+1-l}\prod\limits_{j=0}^{l-1} s_{(n+1)-l+j+1}^{2^{j}} \in M$
for $l \in \{0, 1, \ldots , n\}$.
Moreover,
$t_1t_2t_3^2t_4^2\ldots t_m^{\lceil \frac{m}{2} \rceil}=
s_0s_1s_2^2s_3^3\ldots s_{n+1}^{2^n}$,
which gives the assertion for $l=n+1$.

\medskip

\noindent
$\text{\rm (iii)} \Leftrightarrow \text{\rm (iv)}$ follows from
the equivalence of presentations (ii) and (iii) in
Proposition \ref{p34} (for details see \cite{JMZfactorizations},
the proofs of ${\rm (iv)} \Rightarrow {\rm (vi)}$ in Proposition~1a
and ${\rm (vi)} \Rightarrow {\rm (iv)}$ in Proposition~1b).

\medskip

\noindent
$\text{\rm (iv)} \Rightarrow \text{\rm (v)}$\\
Assume (iv).
Consider $a\in H$, $b\in \calS (H)$ such that $a\mid b^n$
for some $n\geq 1$, and $ab\in M$.
Let $a=s_1s_2\ldots s_m$, where $s_1,\ldots ,s_m\in \calS (H)$,
$s_i\mid s_{i+1}$ for $i=1,\ldots ,m-1$.
Then $s_m\mid b^n$, hence $s_m\mid b$, because $s_m\in \calS (H)$.
We have $s_1s_2\ldots s_mb=ab\in M$.
By (iv) we obtain $s_1, s_2,\ldots , s_m, b\in M$, so $a,b\in M$.

\medskip

\noindent
$\text{\rm (v)} \Rightarrow \text{\rm (iv)}$\\
Assume (v).
Let $s_1s_2\ldots s_n\in M$, where $s_1,\ldots ,s_n\in \calS (H)$,
$s_i\mid s_{i+1}$ for $i=1,\ldots ,n-1$.
Put $a=s_1s_2\ldots s_{n-1}$, $b=s_n$.
Then $a\mid b^{n-1}$.
By (v) we have $s_1s_2\ldots s_{n-1}\in M$ and $s_n\in M$,
and the assertion follows by induction.
\end{proof}

\section{Necessary and sufficient conditions for \\
${\cal S}(M)\subset {\cal S}(H)$}
\label{s5}

In this section we obtain a full characterization of submonoids
of a factorial monoid for which all square-free elements
of a submonoid are square-free in a monoid.

\medskip

Let us note that the formulation and the proof of Proposition 4.1
from \cite{JMZproperties} involve only the multiplicative structure
of a domain.
Thus we have the equivalence of the conditions (vi) -- (viii)
of the following Theorem \ref{t51}.
For the same reason implication
$\text{\rm (viii)}\Rightarrow\text{\rm (i)}$ of Theorem \ref{t51}
follows from the proof of implication
$\text{\rm (ii)}\Rightarrow\text{\rm (i)}$ of Theorem 3.4
from \cite{JZanalogs}.

\begin{tw}
\label{t51}
Let $H$ be a factorial monoid.
Let $M\subset H$ be a submonoid such that $M^{\times}=H^{\times}$.
The following conditions are equivalent:

\medskip

\noindent
{\rm (i)} \
$\calS(M)\subset \calS (H)$,

\medskip

\noindent
{\rm (ii)} \
$\calA (M)\subset \calS (H)$ and, for every $a,b\in M$,
$$a\rpr_M b\Rightarrow a\rpr_H b,$$

\medskip

\noindent
{\rm (iii)} \
$\calA (M)\subset \calS (H)$ and, for every $a,b\in \calA (M)$,
$$a\nsim_M b\Rightarrow a\rpr_H b,$$

\medskip

\noindent
{\rm (iv)} \
$M=H^{\times}\times \sfF (B)$, where $B$ is any set of
pairwise relatively prime (in~$H$) non-invertible square-free 
elements of $H$,

\medskip

\noindent
{\rm (v)} \
for every $n\geq 1$ and $s_1,s_2,\dots,s_n\in\calS (H)$
such that $s_i\rpr_H s_j$ for $i\neq j$,
$$s_1s_2^2s_3^3\ldots s_n^n\in M\Rightarrow s_1,s_2,\dots,s_n\in M,$$

\medskip

\noindent
{\rm (vi)} \
for every $n\geq 1$, $k_1,\dots,k_n\geq 0$ and
$q_1,\dots,q_n\in\calA (H)$ such that $q_i\not\sim_H q_j$ for
$i\neq j$,
$$q_1^{k_1}\dots q_n^{k_n}\in M\Rightarrow
q_1^{c^{(1)}_i}\dots q_n^{c^{(n)}_i}\in M\; \mbox{for each}\; i,$$
where $k_j=c^{(j)}_r2^r+\ldots+c^{(j)}_02^0$ for $j=1,\dots,n$,
with $c^{(j)}_i\in\{0,1\}$ for $i=0,\dots,r$.

\medskip

\noindent
{\rm (vii)} \
for every $n\geq 0$ and $s_0,\dots,s_n\in\calS (H)$,
$$s_n^{2^n}\dots s_1^2s_0\in M\Rightarrow s_0,\dots , s_n\in M,$$

\medskip

\noindent
{\rm (viii)} \
for every $a\in H$ and $b\in\calS (H)$,
$$a^2b\in M\Rightarrow a,b\in M.$$
\end{tw}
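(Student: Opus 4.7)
The plan is to exploit the two given inputs — equivalence of (vi), (vii), (viii) and the implication (viii) $\Rightarrow$ (i) — and to close the remaining implications via the cycle (i) $\Rightarrow$ (viii) $\Rightarrow$ (iv) $\Rightarrow$ (ii), (iii), (v) $\Rightarrow$ (vi), together with the routine (ii) $\Rightarrow$ (iii) and a structural (iii) $\Rightarrow$ (iv).

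For (i) $\Rightarrow$ (viii) I would use strong induction on the number $|a|$ of prime factors (with multiplicity) of $a$ in $H$. The base $a \in H^\times$ is immediate, since then $a \in M^\times \subset M$ and $b = a^{-2}(a^2 b) \in M$. For the step, $a$ is a non-unit, so $a^2 b \notin \calS(H)$ and hence, by (i), $a^2 b \notin \calS(M)$; write $a^2 b = w^2 z$ with $w \in M \setminus M^\times$ and $z \in M$. Writing $e_p(\cdot)$ for the exponent of a prime $p$ of $H$, the bound $2 e_p(w) \leq 2 e_p(a) + e_p(b) \leq 2 e_p(a) + 1$ (using $b \in \calS(H)$) forces $e_p(w) \leq e_p(a)$, hence $w \mid_H a$. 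Setting $a' = a/w \in H$ yields $a'^2 b = z \in M$ with $|a'| < |a|$, and the inductive hypothesis supplies $a', b \in M$, whence $a = w a' \in M$.

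For (viii) $\Rightarrow$ (iv) I would first observe $\calA(M) \subset \calS(H)$: if an atom $a$ of $M$ were not square-free in $H$, write $a = u^2 v$ with $u \in H \setminus H^\times$ and $v \in \calS(H)$; (viii) supplies $u, v \in M$, and $a = u \cdot (uv)$ is a proper $M$-factorization, contradicting irreducibility. The main step is pairwise $H$-coprimality of non-associated atoms: for $a, b \in \calA(M)$ non-associated with $g = \gcd_H(a, b) \notin H^\times$, apply (viii) to $ab = g^2 \cdot (a/g)(b/g)$ (with $(a/g)(b/g) \in \calS(H)$, the coprime square-free cofactor) to obtain $g \in M$. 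The decisive trick is the identity $(a/g)^2 b = a \cdot (a/g)(b/g)$; the right-hand side lies in $M$ by the first application, so a second invocation of (viii) delivers $a/g \in M$. By symmetry $b/g \in M$, and atomicity of $a, b$ with $g \in M \setminus M^\times$ forces $a \sim_M g \sim_M b$, contradicting non-association. Since $M$ inherits ACCP (and hence atomicity) from $H$ via $M^\times = H^\times$, pairwise $H$-coprimality of atom representatives makes atomic factorizations in $M$ unique (disjoint prime supports in the factorial monoid $H$), yielding $M = H^\times \times \sfF(B)$ for $B$ a choice of representatives.

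The implications (iv) $\Rightarrow$ (i), (ii), (iii), (v) are routine unpackings of the explicit structure: square-freeness in $M$ forces every $B$-exponent to be at most one, which transports to $\calS(H)$ by pairwise $H$-coprimality of $B$; $M$-coprimality corresponds to disjoint $B$-supports and hence to $H$-coprimality; non-associated atoms of $M$ are distinct elements of $B$, visibly coprime in $H$; and for (v), matching the $H$-prime factorization of $s_1 s_2^2 \cdots s_n^n$ against $u \prod b^{e_b}$ shows that each $s_i$ is a unit times the product of those $b \in B$ with $M$-exponent $i$, hence lies in $M$. Next (ii) $\Rightarrow$ (iii) is immediate (any common non-invertible $M$-divisor of two non-associated atoms would be associated to both), while (iii) $\Rightarrow$ (iv) repeats the atomicity and unique-factorization argument from (viii) $\Rightarrow$ (iv), drawing square-freeness and coprimality of atoms straight from the hypothesis. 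Finally, (v) $\Rightarrow$ (vi) follows by regrouping: $q_1^{k_1} \cdots q_n^{k_n} = \prod_t T_t^t$ with $T_t = \prod_{k_j = t} q_j$ is pairwise coprime and square-free, so (v) places every $T_t \in M$, and each binary component $\prod_j q_j^{c_j^{(i)}}$ is the product of those $T_t$ whose index has bit $i$ set, hence in $M$. The main obstacles are the induction in (i) $\Rightarrow$ (viii), which requires recognising the size-reducing factor $w$, and the identity $(a/g)^2 b = a \cdot (a/g)(b/g)$ that powers the coprimality step of (viii) $\Rightarrow$ (iv).
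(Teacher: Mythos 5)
Your proposal is correct, and on the two decisive steps it takes a genuinely different route from the paper. The paper's cycle is $\text{(i)}\Rightarrow\text{(iii)}\Rightarrow\text{(iv)}\Rightarrow\text{(v)}\Rightarrow\text{(vi)}\Leftrightarrow\text{(vii)}\Leftrightarrow\text{(viii)}\Rightarrow\text{(i)}$ together with $\text{(ii)}\Leftrightarrow\text{(iii)}$; its hardest new step is a direct proof of $\text{(i)}\Rightarrow\text{(iii)}$ built on a minimal counterexample (a $c\in M\setminus M^{\times}$ of minimal length with $c\mid_H a,b$ and $ab=c^2d$), from which association of $a$, $b$ and $c$ is extracted. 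You instead prove $\text{(i)}\Rightarrow\text{(viii)}$ by induction on the number of prime factors of $a$, where the exponent bound $2e_p(w)\leq 2e_p(a)+e_p(b)\leq 2e_p(a)+1$ forces $w\mid_H a$ and shrinks the problem -- a clean, self-contained argument that the paper does not give (it only cites the reverse implication $\text{(viii)}\Rightarrow\text{(i)}$ from \cite{JZanalogs}). You then obtain the coprimality of non-associated atoms inside a standalone $\text{(viii)}\Rightarrow\text{(iv)}$ step via the identity $(a/g)^2b=a\cdot(a/g)(b/g)$, which lets (viii) hand you $a/g\in M$ directly instead of reconstructing it through the paper's minimality argument. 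The remaining implications ($\text{(ii)}\Leftrightarrow\text{(iii)}$, $\text{(iii)}\Rightarrow\text{(iv)}$, $\text{(iv)}\Rightarrow\text{(v)}$, $\text{(v)}\Rightarrow\text{(vi)}$, and the appeal to the BF-property of $M$ for atomicity) coincide with the paper's; your extra direct derivations of $\text{(iv)}\Rightarrow\text{(i)},\text{(ii)},\text{(iii)}$ are redundant given the cycle but harmless. What your route buys is transparency: both nontrivial steps reduce to explicit prime-exponent bookkeeping in the factorial monoid $H$, whereas the paper's route leans more heavily on the cited results and a subtler extremal argument.
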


\begin{proof}
First, observe that $H$ is a BF-monoid and the submonoid $M$
satisfies $M^{\times}=H^{\times}\cap M$, so $M$ is also a BF-monoid,
by \cite{GeroldingerHalterKoch}, Corollary 1.3.3, p.~17.
In particular, $M$ is atomic.

\medskip

\noindent
$\text{\rm (i)}\Rightarrow\text{\rm (iii)}$ \
Assume $\calS (M)\subset \calS (H)$.
Since $\calA (M)\subset \calS (M)$, we have
$\calA (M)\subset \calS (H)$.

\medskip

Suppose that there exist $a,b\in \calA (M)$ such that $a\nsim_M b$
and $a$, $b$ are not relatively prime in $H$.
Then $t=\gcd_H(a,b)\in H\setminus H^{\times}$, so $a=tu$, $b=tv$
for some $u,v\in H$, $u\rpr_H v$.
Since $a,b\in \calA (M)$, we have $a,b\in \calS (H)$,
but $u\mid_H a$, $v\mid_H b$, so $u,v\in \calS (H)$,
and then $uv\in \calS (H)$, because $u\rpr_H v$.

\medskip

Now, we have $ab=t^2uv\not\in \calS (H)$, so $ab\not\in \calS (M)$,
that is, $ab=c^2d$ for some $c\in M\setminus M^{\times}$, $d\in M$.
We may assume that $c\in M\setminus M^{\times}$ is minimal
(with respect to natural length function in $H$)
satisfying the following property:
"there exist $a,b,d\in H$ such that $c\mid_H a,b$ and $ab=c^2d$".
We have $c^2d=t^2uv$, where $uv\in \calS (H)$, so $c\mid_H t$,
because $H$ is factorial, and then $t=cw$ for some $w\in H$.

\medskip

We obtain $a=tu=cwu$, so $uv\in \calS (H)$, since $a\in \calS (H)$.
We have $ac=c^2wu\not\in \calS (H)$, so $ac\not\in \calS (M)$,
hence $ac=e^2h$ for some $e\in M\setminus M^{\times}$, $h\in M$.
Since $e^2h=c^2wu$, where $wu\in \calS (H)$, we infer $e\mid_H c$,
and then also $e\mid_H a$.
We have obtained $e\mid_H a,c$ and $ac=e^2h$, so $e\sim_H c$
by the minimality of $c$.
Then $e\sim_M c$, because $M^{\times}=H^{\times}$.
But $ac=e^2h$, so $a\sim_M eh\sim_M ch$.
Then $a\sim_M c$, since $a\in \calA (M)$
and $c\in M\setminus M^{\times}$.

\medskip

Analogously we show that $b\sim_M c$, so $a\sim_M b$,
a contradiction.

\medskip

\noindent
$\text{\rm (ii)}\Rightarrow\text{\rm (iii)}$ \
It is enough to note that for every $a,b\in \calA (M)$,
$$a\nsim_M b\Rightarrow a\rpr_M b.$$
Namely, if $a,b\in \calA (M)$ are not relatively prime in $M$,
then $a=cd$ and $b=ce$ for some $c\in M\setminus M^{\times}$,
$d,e\in M$, so $d,e\in M^{\times}$ and $a\sim_M b$.

\medskip

\noindent $\text{\rm (iii)}\Rightarrow\text{\rm (ii)}$ \
Assume (iii) and consider elements $a,b\in M$ such that $a\rpr_M b$.
We already know that $M$ is atomic.
Let $a=a_1\dots a_m$ and $b=b_1\dots b_n$ be factorizations
into atoms in $M$.
Since $a\rpr_M b$, for all $i,j$ we have $a_i\nsim_M b_j$,
so $a_i\rpr_H b_j$, but then $a\rpr_H b$.

\medskip

\noindent
$\text{\rm (iii)}\Rightarrow\text{\rm (iv)}$ \
Assume (iii).
Let $B$ be a maximal (with respect to inclusion)
set of pairwise non-associated (in $M$) atoms of $M$.
By (iii) the elements of $B$ are pairwise relatively prime in $H$.
$H$ is a factorial monoid, so $B$ generates a free submonoid.
Since $M$ is atomic and $M^{\times}=H^{\times}$, we obtain
$M=H^{\times}\times \sfF (B)$.

\medskip

\noindent
$\text{\rm (iv)}\Rightarrow\text{\rm (v)}$ \
Assume (iv).
Let $a=s_1s_2^2s_3^3\ldots s_n^n\in M$, where
$s_1,\ldots,s_n\in \calS (H)$, $s_i\rpr_H s_j$ for $i\neq j$.
By (iv), the element $a$ can be presented in the form
$a=ct_1t_2^2t_3^3\ldots t_m^m$ with $c\in H^{\times}$,
$t_i=\prod_{j=1}^{r_i}b_j^{(i)}\in M$, $r_i\geq 0$,
$m\geq n$, and pairwise different all $b_j^{(i)}\in B$.
Since $b_j^{(i)}$ are square-free and pairwise relatively prime
in $H$, then $t_1,\ldots,t_m$ are also square-free and pairwise
relatively prime in $H$.
Finally, for $i=1,\dots,n$ we have $s_i\sim_H t_i$, so $s_i\in M$.

\medskip

\noindent
$\text{\rm (v)}\Rightarrow\text{\rm (vi)}$ \
Assume (v).
Let $a=q_1^{k_1}\dots q_n^{k_n}\in M$, where
$q_1,\dots,q_n\in\calA (H)$, $q_i\not\sim_H q_j$ for $i\neq j$,
and $k_1,\dots,k_n\geq 0$.
Put $m=\max(k_1,\ldots,k_n)$.
For $l=1,\dots,m$ denote $s_l=\prod_{j\colon k_j=l}q_j$.
Then $s_1,s_2,\dots,s_m\in\calS (H)$ and $s_i\rpr_H s_j$
for $i\neq j$.
We have $a=s_1s_2^2\ldots s_m^m$, so $s_1,s_2,\dots,s_m\in M$,
by (v).

\medskip

Now, let $k_j=c^{(j)}_r2^r+\ldots+c^{(j)}_02^0$ for $j=1,\dots,n$,
with $c^{(j)}_i\in\{0,1\}$ for $i=0,\dots,r$.
Note that if $k_{j_1}=k_{j_2}$, then $c^{(j_1)}_i=c^{(j_2)}_i$
for each $i$, so we may denote $d^{(l)}_i=c^{(j)}_i$ for each $j$
such that $k_j=l$, where $l=1,\dots,m$.
Then
$q_1^{c^{(1)}_i}\dots q_n^{c^{(n)}_i}=
s_1^{d^{(1)}_i}\dots s_m^{d^{(m)}_i}\in M$.
\end{proof}

The only type of factorizations from Proposition~{\rm\ref{p34}}
we haven't considered in Theorem~\ref{t43} nor Theorem~\ref{t51}
is {\rm (i)}.
There is no surprise that in this case we obtain a divisor-closed
submonoid.

\begin{pr}
\label{p52}
Let $H$ be a monoid such that each element $a\in H$
can be presented in the form $a=s_1s_2\ldots s_n$,
where $s_1,s_2,\dots,s_n\in\calS (H)$.
Let $M\subset H$ be a submonoid.
The following conditions are equivalent:

\medskip

\noindent
{\rm (i)} \
for every $a,b\in H$,
$$ab\in M\Rightarrow a,b\in M.$$

\medskip

\noindent
{\rm (ii)} \
for every $n\geq 1$ and $s_1,s_2,\dots,s_n\in\calS (H)$,
$$s_1s_2\ldots s_n\in M\Rightarrow s_1,s_2,\dots,s_n\in M.$$
\end{pr}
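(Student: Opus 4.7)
The plan is to treat the two directions separately; both reduce directly to the hypothesis that every element of $H$ factors as a product of square-free elements.

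For $\text{(i)} \Rightarrow \text{(ii)}$, I would proceed by induction on $n$. The case $n=1$ is immediate. For the inductive step, taking $a = s_1$ and $b = s_2 \ldots s_n$ and applying (i) to $ab = s_1 s_2 \ldots s_n \in M$ yields both $s_1 \in M$ and $s_2 \ldots s_n \in M$; the inductive hypothesis, applied to the latter product of square-free elements, finishes the argument.

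For $\text{(ii)} \Rightarrow \text{(i)}$, given $ab \in M$, I would invoke the standing hypothesis on $H$ to write $a = s_1 \ldots s_m$ and $b = s_{m+1} \ldots s_{m+k}$ with all $s_i \in \calS(H)$. Concatenating, $ab = s_1 s_2 \ldots s_{m+k} \in M$, so (ii) forces $s_i \in M$ for every $i$. Since $M$ is a submonoid, $a = s_1 \ldots s_m$ and $b = s_{m+1} \ldots s_{m+k}$ both lie in $M$.

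There is no real obstacle here: the statement is essentially a direct translation between being divisor-closed and being closed under square-free factors, once the ambient monoid admits a square-free factorization of each element. The only small thing worth noting is that the hypothesis applies uniformly to units as well, because any $u \in H^{\times}$ is itself square-free (if $u = b^2 c$ were invertible then $b$ would be too), so the factorization hypothesis does supply a presentation with $n \geq 1$ in every case.
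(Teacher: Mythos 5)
Your proof is correct and complete; the paper in fact states Proposition \ref{p52} without proof, and your argument is exactly the one the authors evidently have in mind: (i) gives divisor-closedness of $M$, which yields (ii) by a one-line induction, while (ii) gives (i) by concatenating square-free presentations of $a$ and $b$ supplied by the hypothesis on $H$ and then recombining inside the submonoid. Your closing remark that units are square-free (so the hypothesis is not vacuous on $H^{\times}$) is a correct and worthwhile sanity check.
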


\section{Radical elements and the uniqueness of factorizations}
\label{s6}

Let $H$ be a monoid.
Recall from \cite{Reinhart} that an element $a\in H$ is called
radical if the principal ideal $aH$ is radical, equivalently,
if for arbitrary $b\in H$ and $n\geq 1$,
$$a\mid b^n\Rightarrow a\mid b.$$
Denote by $\calR (H)$ the set of radical elements of $H$,
and by $\calP (H)$ the set of prime elements.

\medskip

Clearly, every prime element is radical:
$$\calP (H)\subset \calR (H).$$
This is an analog of the fact that every atom is square-free.

\medskip

Note also that every radical element is square-free,
see \cite{JMZproperties}, Lemma~3.2~b),
what is an analog of the fact that a prime element is an atom.

\begin{pr}
\label{p61}
Let $H$ be a monoid. Then
$$\calR (H)\subset \calS (H).$$
\end{pr}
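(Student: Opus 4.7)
The plan is to argue by contradiction, using the definition of radical together with the cancellation property of $H$. Suppose $a \in \calR(H)$ but $a \notin \calS(H)$. Then by definition of square-freeness we may write $a = b^2 c$ with $b, c \in H$ and $b \notin H^{\times}$. The goal is to extract from this decomposition an element whose $n$-th power is divisible by $a$, so that the radical property can force $b$ to be invertible.

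The key observation is to consider the element $bc$. We compute
$$(bc)^2 = b^2 c^2 = (b^2 c)\, c = a c,$$
so in particular $a \mid (bc)^2$. Since $a$ is radical, the defining implication $a \mid x^n \Rightarrow a \mid x$ (with $x = bc$ and $n = 2$) gives $a \mid bc$. Write $bc = a d$ for some $d \in H$. Multiplying both sides by $b$ yields
$$a = b^2 c = b \cdot (bc) = b \cdot a d = a b d,$$
and cancellation in $H$ (available since $H$ is a commutative cancellative monoid) collapses this to $1 = b d$. Hence $b \in H^{\times}$, contradicting the assumption $b \notin H^{\times}$.

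There is essentially no obstacle here: the only step requiring a small trick is finding the right witness for the radical property, and $bc$ is the natural choice because $(bc)^2$ already absorbs a copy of $a$. Cancellativity of $H$ then does the rest. This proof is just the monoid version of \cite{JMZproperties}, Lemma~3.2~b), reproduced here to keep the exposition self-contained in the multiplicative setting.
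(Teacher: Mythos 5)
Your proof is correct: the witness $bc$ with $(bc)^2 = ac$ is exactly the right application of the radical property, and cancellativity finishes the argument cleanly. The paper itself gives no proof of this proposition, deferring to \cite{JMZproperties}, Lemma~3.2~b), whose argument this reproduces, so your write-up matches the intended approach.
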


The next lemma completes Lemma~\ref{l31}.

\begin{lm}
\label{l62}
Let $H$ be a monoid and let $a\in\calR (H)$ and $b\in H$.
If $b\mid a$, then $b\in\calR (H)$.
\end{lm}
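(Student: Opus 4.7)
The plan is to reduce divisibility by $b$ to divisibility by $a$ via the quotient $d$ defined by $a = bd$, and then invoke the hypothesis that $a$ is radical.

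Concretely, write $a = bd$ for some $d \in H$, and suppose that $b \mid c^n$ for some $c \in H$ and some $n \geq 1$, so that $c^n = be$ for some $e \in H$. The key observation is that
\[
(dc)^n = d^n c^n = d^n \cdot be = bd \cdot d^{n-1} e = a \cdot d^{n-1} e,
\]
so $a \mid (dc)^n$. Since $a \in \calR(H)$, this forces $a \mid dc$, i.e.\ $dc = am = bdm$ for some $m \in H$. Because $H$ is cancellative, we cancel $d$ to obtain $c = bm$, which gives $b \mid c$. Hence $b \in \calR(H)$.

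There is no real obstacle here; the only subtlety is choosing the correct multiplier to promote the divisibility $b \mid c^n$ into a divisibility by $a$, and this is supplied by $d^n$ together with the factorization $a = bd$. Cancellativity of $H$, which is a standing assumption of the paper, is then exactly what allows us to pass from $bd \mid dc$ back to $b \mid c$.
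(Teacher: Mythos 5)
Your proof is correct and follows essentially the same route as the paper's: both write $a=bd$, observe that $a\mid (cd)^n$, apply radicality of $a$ to get $a\mid cd$, and cancel $d$ to conclude $b\mid c$. The paper's version is merely terser, leaving the final cancellation implicit where you spell it out.
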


\begin{proof}
Let $a\in\calR (H)$ and $b\mid a$.
Let $c\in H$ and $b\mid c^n$ for some $n\geq 1$.
By assumption we have $a=bd$, where $d\in H$.
Then $a\mid c^nd^n$ and this implies $a\mid cd$, so $b\mid c$.
\end{proof}

\medskip

In Lemma~\ref{l63} a), b) below we recall Lemma~2 a), d) from
\cite{JMZfactorizations} in terms of monoids.

\begin{lm}
\label{l63}
Let $H$ be a decomposition monoid.

\medskip

\noindent
{\bf a)}
Let $a,b,c\in H$.
If $a\mid bc$ and $a\rpr b$, then $a\mid c$.

\medskip

\noindent
{\bf b)}
Let $a_1,\dots,a_n,b\in H$.
If $a_i\rpr b$ for $i=1,\dots,n$, then $a_1\ldots a_n\rpr b$.

\medskip

\noindent
{\bf c)}
Let $a,b_1,\dots,b_n\in H$.
If $a\mid b_1\ldots b_n$, then there exist $a_1,\dots,a_n\in H$
such that $a=a_1\ldots a_n$ and $a_i\mid b_i$ for $i=1,\dots,n$.

\medskip

\noindent
{\bf d)}
Let $a_1,\dots,a_n\in\calS (H)$, $b\in H$.
If $a_i\rpr a_j$ for $i\neq j$ and $a_i\mid b$ for $i=1,\dots,n$,
then $a_1\ldots a_n\mid b$.
\end{lm}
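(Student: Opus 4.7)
The plan is to prove the four parts in the order (a), (c), (b), (d), since each part will invoke an earlier one together with the primality of every element that defines a decomposition monoid. I do not expect any genuine obstacle; primality is the single tool, and I will note in passing that the square-freeness of the $a_i$'s in (d) is not actually used in the argument below.

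For (a), I would invoke primality of $a$ directly: from $a\mid bc$, write $a=a_1a_2$ with $a_1\mid b$ and $a_2\mid c$. Since $a_1$ is then a common divisor of $a$ and $b$ and $a\rpr b$, $a_1$ must lie in $H^{\times}$, so $a\sim a_2$ divides $c$. For (c), I would induct on $n$: the case $n=2$ is exactly the primality of $a$, and for the inductive step I would apply primality to $a\mid b_1\cdot(b_2\ldots b_n)$ to split $a=a_1c$ with $a_1\mid b_1$ and $c\mid b_2\ldots b_n$, then apply the inductive hypothesis to $c$ and $b_2,\ldots,b_n$.

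For (b), I would take an arbitrary common divisor $d$ of $a_1\ldots a_n$ and $b$ and, using (c), write $d=d_1\ldots d_n$ with $d_i\mid a_i$. Each $d_i$ then divides both $a_i$ and $b$ (since $d_i\mid d\mid b$), so $d_i\in H^{\times}$ by $a_i\rpr b$, whence $d\in H^{\times}$ and therefore $a_1\ldots a_n\rpr b$. For (d), I would induct on $n$: from $a_n\mid b$, write $b=a_nc$; for each $i<n$ the hypothesis $a_i\rpr a_n$ combined with part (a) gives $a_i\mid c$, and the pairwise relative primeness of $a_1,\ldots,a_{n-1}$ is preserved, so the inductive hypothesis yields $a_1\ldots a_{n-1}\mid c$, whence $a_1\ldots a_n\mid b$. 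The only point that requires a moment of thought is (a), where one must recognize primality as exactly the mechanism that transfers divisibility across a coprime factor; once that observation is in hand, (b), (c), (d) reduce to short inductions.
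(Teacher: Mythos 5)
Your proof is correct throughout, and for the two parts the paper actually proves here --- (c) and (d) --- it follows essentially the same route: (c) is the same ``simple induction'' on primality, and (d) is the same short induction whose key step is transferring divisibility across a coprime factor via (a). The differences are organizational. The paper does not reprove (a) and (b) but recalls them from an earlier note (Lemma 2 a), d) of the cited factorizations paper), whereas you supply self-contained arguments; your derivation of (b) from (c) (decompose a common divisor $d$ of $a_1\ldots a_n$ and $b$ as $d_1\ldots d_n$ with $d_i\mid a_i$ and kill each $d_i$ by $a_i\rpr b$) is a clean alternative to the direct induction one would otherwise write. In (d) you peel off $a_n$ and pass to $c$ with $b=a_nc$, which lets you avoid invoking (b) altogether, while the paper groups $a=a_1\ldots a_n$ and uses (b) to get $a\rpr a_{n+1}$ before applying (a); both are the same idea with different bookkeeping. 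Your side remark that the hypothesis $a_1,\dots,a_n\in\calS(H)$ is never used in (d) is accurate --- the paper's own proof does not use it either; it is carried along only because that is the form in which the lemma is applied in Proposition \ref{p64}.
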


\begin{proof}
{\bf c)}
Simple induction.

\medskip

{\bf d)}
Induction.
Assume the assertion for $n$.
Consider $a_1,\dots,a_n,a_{n+1}\in\calS (H)$, $a_i\rpr a_j$
for $i\neq j$, and $b\in H$ such that $a_i\mid b$
for $i=1,\dots,n+1$.
Put $a=a_1\ldots a_n$.
Then, by the induction hypothesis, $a\mid b$, so $b=ac$
for some $c\in H$.
Moreover, $a\rpr a_{n+1}$ by b).
Since $a_{n+1}\mid ac$, by a) we obtain $a_{n+1}\mid c$,
and than $aa_{n+1}\mid ac$.
\end{proof}

Now we can prove that in a decomposition monoid every square-free
element is radical.
This is an analog of the fact that in a decomposition monoid
atoms are primes.

\begin{pr}
\label{p64}
Let $H$ be a decomposition monoid. Then
$$\calR (H)=\calS (H).$$
\end{pr}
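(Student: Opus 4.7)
The inclusion $\calR(H)\subset\calS(H)$ is already established (Proposition~\ref{p61}), so the plan is to prove the reverse inclusion $\calS(H)\subset\calR(H)$ by combining the preparatory lemmas already in hand. The strategy is to take a square-free $a$, factor it along a power $b^n$ that it divides, check that the factors inherit square-freeness and pairwise coprimality, and then glue the factors back together.

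Concretely, I would start with $a\in\calS(H)$ and suppose $a\mid b^n$ for some $b\in H$, $n\geq 1$. Since $H$ is a decomposition monoid, Lemma~\ref{l63}~c) (applied to $a$ dividing the product $\underbrace{b\cdot b\cdots b}_{n}$) yields a factorization $a=a_1a_2\ldots a_n$ with $a_i\mid b$ for every $i$. This is the only place where the decomposition hypothesis is used.

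Next, Lemma~\ref{l31} applied to the factorization $a=a_1\ldots a_n$ of the square-free element $a$ tells us both that each $a_i$ is itself square-free and that $a_i\rpr a_j$ for $i\neq j$. The factors therefore satisfy exactly the hypotheses of Lemma~\ref{l63}~d): they are pairwise relatively prime square-free elements, each dividing the common element $b$. Consequently $a=a_1\ldots a_n\mid b$, which is precisely the radicality condition for $a$.

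The proof is really a clean assembly of the previous lemmas rather than a proof requiring a new idea; all the substance sits in Lemma~\ref{l63}~c) and~d), where the decomposition hypothesis is exploited. The only subtlety to watch is ensuring that the factorization produced by part~c) has exactly $n$ factors matching the $n$ copies of $b$ (so part~d) applies verbatim), which is automatic from the statement of~c).
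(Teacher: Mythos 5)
Your proposal is correct and follows exactly the paper's own argument: one inclusion is Proposition~\ref{p61}, and for the other you apply Lemma~\ref{l63}~c) to factor $a$ along $b^n$, Lemma~\ref{l31} to get square-freeness and pairwise coprimality of the factors, and Lemma~\ref{l63}~d) to conclude $a\mid b$. No differences worth noting.
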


\begin{proof}
Let $a\in \calS (H)$.
Assume that $a\mid b^n$ for some $b\in H$ and $n\geq 1$.
Then, by Lemma~\ref{l63} c), there exist $a_1,\dots,a_n\in H$
such that $a=a_1\ldots a_n$ and $a_i\mid b$ for $i=1,\dots,n$.
Observe that $a_1,\dots,a_n\in\calS (H)$ and $a_i\rpr a_j$
for $i\neq j$, by Lemma~\ref{l31}, so $a_1\ldots a_n\mid b$
by Lemma~\ref{l63} d).
\end{proof}

In the rest of this section we concern uniqueness properties
of factorizations (ii) -- (iv) and extractions (v), (vi)
from Proposition~\ref{p34}.
In an arbitrary monoid we have the uniqueness of factorization (ii)
and extraction (v) for radical elements.

\begin{pr}
\label{p65}
Let $H$ be a monoid.

\medskip

\noindent
{\bf a)}
For every $r_1,\dots,r_n,t_1,\dots,t_n\in\calR (H)$
such that $r_i\mid r_{i+1}$ and $t_i\mid t_{i+1}$, $i=1,\dots,n-1$,
if
$$r_1r_2\dots r_n\sim t_1t_2\dots t_n,$$
then $r_i\sim t_i$ for $i=1,\dots,n$.

\medskip

\noindent
{\bf b)}
For every $a,c\in H$, $b,d\in \calR (H)$ such that
$a\mid b^m$ and $c\mid d^n$ for some $m,n\geq 1$,
if
$$ab\sim cd,$$
then $a\sim c$ and $b\sim d$.
\end{pr}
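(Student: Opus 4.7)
The plan is to use the defining property of radical elements repeatedly: if $r\in\calR(H)$ and $r\mid x^k$ for some $k\geq 1$, then $r\mid x$. This is essentially the only non-formal ingredient needed.

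For part (a), I would proceed by induction on $n$; the case $n=1$ is trivial. For the step, the idea is first to pin down the ``top'' elements $r_n$ and $t_n$ as associated. Since $r_i\mid r_n$ for every $i$, the product $r_1r_2\cdots r_n$ divides $r_n^n$, hence so does $t_1t_2\cdots t_n$, and in particular $t_n\mid r_n^n$; as $t_n$ is radical this gives $t_n\mid r_n$, and by symmetry $r_n\sim t_n$. Cancelling $r_n$ against $t_n$ in $H$ leaves $r_1\cdots r_{n-1}\sim t_1\cdots t_{n-1}$, again two chains of radical elements, to which the inductive hypothesis applies directly.

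Part (b) is an application of the same trick without any induction. From $a\mid b^m$ one gets $ab\mid b^{m+1}$, hence $cd\mid b^{m+1}$; since $d\in\calR(H)$, the radical property yields $d\mid b$. Swapping the two pairs ($c\mid d^n$ gives $ab\mid d^{n+1}$, and $b$ is radical) gives $b\mid d$. Therefore $b\sim d$, and cancellativity of $H$ then forces $a\sim c$.

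The only point that calls for a check is that the inductive step in (a) remains legitimate after cancellation: one has to observe that $r_1,\dots,r_{n-1}$ (and similarly the $t_i$) are still radical (by hypothesis) and still form a divisibility chain (which is preserved under truncation). There is no deeper obstacle — crucially, no gcd or decomposition hypothesis on $H$ is needed, so the argument works in an arbitrary commutative cancellative monoid.
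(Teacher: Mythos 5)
Your proof is correct and follows essentially the same route as the paper: use the divisibility chain to show the product divides the $n$-th power of the top element, apply radicality to get $r_n\sim t_n$ (resp.\ $b\sim d$), then cancel and iterate. The only cosmetic difference is that you phrase the repetition as a formal induction and run the divisibility in the mirror direction; the substance is identical.
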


\begin{proof}
a) \
Assume that $r_1r_2\dots r_n\sim t_1t_2\dots t_n,$ where
$r_1,\dots,r_n,t_1,\dots,t_n\in\calR (H)$,
$r_i\mid r_{i+1}$ and $t_i\mid t_{i+1}$ for $i=1, \dots,n-1$.
We have $r_n\mid t_1 \dots t_n$, so $r_n\mid t_n^n$.
Since $r_n\in\calR (H)$ we obtain $r_n\mid t_n$.
Analogously, we get $t_n\mid r_n$.
Hence $r_n\sim t_n$ and $r_1\dots r_{n-1}\sim t_1\dots t_{n-1}$.
Then we repeat the above reasoning for $r_{n-1}$ and $t_{n-1}$, etc.

\medskip

\noindent
b) \
Assume that $ab\sim cd$, where $a,c\in H$, $b,d\in\calR (H)$,
$a\mid b^m$ and $c\mid d^n$ for some $m, n\geq 1$.
We see that $b\mid cd$, so $b\mid d^{n+1}$.
Since $b\in\calR (H)$ we obtain $b\mid d$.
Analogously, we get $d\mid b$, so $b\sim d$, and then $a\sim c$.
\end{proof}

In a decomposition monoid we have the uniqueness of factorization
(iii) from Proposition~\ref{p34}.

\begin{pr}
\label{p66}
Let $H$ be a decomposition monoid.
For every $s_1,\dots,s_n$, $t_1$, $\dots$, $t_n\in\calS (H)$
such that $s_i\rpr s_j$ and $t_i\rpr t_j$ for $i\neq j$,
if
$$s_1s_2^2s_3^3\ldots s_n^n\sim t_1t_2^2t_3^3\ldots t_n^n,$$
then $s_i\sim t_i$ for $i=1,\dots,n$.
\end{pr}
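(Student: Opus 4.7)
The plan is to induct on $n$ and, at each step, identify $s_n$ with $t_n$ up to units, then cancel and apply the inductive hypothesis. The base case $n=1$ is trivial. Once $s_n \sim t_n$ is established, cancellativity of $H$ applied to
$s_n^n \cdot (s_1 s_2^2 \cdots s_{n-1}^{n-1}) \sim t_n^n \cdot (t_1 t_2^2 \cdots t_{n-1}^{n-1})$
yields $s_1 s_2^2 \cdots s_{n-1}^{n-1} \sim t_1 t_2^2 \cdots t_{n-1}^{n-1}$, to which the inductive hypothesis applies.

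Everything therefore hinges on showing $s_n \sim t_n$. By Proposition~\ref{p64}, every square-free element of $H$ is radical, so all the $s_i$ and $t_i$ lie in $\calR(H)$. Setting $a = s_1 s_2^2 \cdots s_n^n$, we have $s_n \mid s_n^n \mid a \sim t_1 t_2^2 \cdots t_n^n$, so Lemma~\ref{l63}~c) produces a factorization $s_n = u_1 u_2 \cdots u_n$ with $u_j \mid t_j^j$ for every $j$. By Lemma~\ref{l31} each $u_j$ is square-free and the $u_j$ are pairwise relatively prime; being radical, each $u_j$ therefore divides $t_j$.

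The key step---and the main obstacle---is showing that $u_j \in H^{\times}$ whenever $j < n$. From $u_j \mid s_n$ we get $u_j^n \mid s_n^n \mid a$. Since $u_j \mid t_j$ and $t_j \rpr t_i$ for $i \neq j$, iterated use of Lemma~\ref{l63}~b) yields $u_j^n \rpr \prod_{i \neq j} t_i^i$, so Lemma~\ref{l63}~a) gives $u_j^n \mid t_j^j$. Now Lemma~\ref{l31} applied to the square-free element $t_j$ produces a factorization $t_j = u_j v_j$ with $u_j \rpr v_j$. Then $u_j^n \mid u_j^j v_j^j$ together with $u_j^n \rpr v_j^j$ forces, by Lemma~\ref{l63}~a), $u_j^n \mid u_j^j$. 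Cancellativity now makes $u_j^{n-j}$ invertible, and since $n-j \geq 1$, so is $u_j$.

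Consequently $s_n \sim u_n$ and $u_n \mid t_n$, i.e.\ $s_n \mid t_n$. The analogous argument with the roles of the $s_i$ and $t_i$ exchanged yields $t_n \mid s_n$. Hence $s_n \sim t_n$ and the induction closes.
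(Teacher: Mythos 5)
Your proof is correct, but it takes a genuinely different route from the paper's. The paper's argument is a three-line reduction: setting $s'_i = s_i\cdots s_n$ and $t'_i = t_i\cdots t_n$ rewrites the type-(iii) factorization $s_1s_2^2\cdots s_n^n$ as the divisibility-chain (type-(ii)) factorization $s'_1s'_2\cdots s'_n$ with $s'_{i+1}\mid s'_i$; Lemma~\ref{l32} shows the $s'_i$, $t'_i$ are square-free (hence radical, by Proposition~\ref{p64}), and Proposition~\ref{p65}~a) then gives $s'_i\sim t'_i$, from which $s_i\sim t_i$ follows by cancellation. You instead work directly: you induct on $n$, use Lemma~\ref{l63}~c) to split $s_n$ as $u_1\cdots u_n$ with $u_j\mid t_j^j$, upgrade this to $u_j\mid t_j$ via radicality, and then run an exponent-counting argument ($u_j^n\mid t_j^j = u_j^jv_j^j$ with $u_j^n\rpr v_j^j$ forces $u_j^n\mid u_j^j$, so $u_j^{n-j}$ is a unit) to kill all cross terms with $j<n$; symmetry and cancellation close the induction. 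All the individual steps check out — the iterated applications of Lemma~\ref{l63}~a), b) and of Lemma~\ref{l31} are legitimate, and the cancellation at the end is fine. What the paper's route buys is brevity and reuse of the already-established uniqueness for chain factorizations; what yours buys is independence from Proposition~\ref{p65} (you never need the uniqueness statement for type-(ii) factorizations), at the cost of a substantially more intricate divisibility analysis. Both ultimately rest on Proposition~\ref{p64}.
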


\begin{proof}
Assume that
$s_1s_2^2s_3^3\ldots s_n^n\sim t_1t_2^2t_3^3\ldots t_n^n$,
where $s_1,\dots,s_n,t_1,\dots,t_n\in\calR (H)$,
$s_i\rpr s_j$ and $t_i\rpr t_j$ for $i\neq j$.
Put $s'_i=s_i\ldots s_n$, $t'_i=t_i\ldots t_n$ for $i=1,\ldots ,n$.
Then
$$s'_1s'_2\ldots s'_n\sim t'_1t'_2\ldots t'_n.$$
Note that $s'_i, t'_i\in \calS (H)$ for $i=1,\ldots ,n$
by Lemma \ref{l32}.
Since $s'_{i+1}\mid s'_i$ and $t'_{i+1}\mid t'_i$ for
$i=1,\ldots ,n-1$, from Proposition \ref{p65} a) we obtain
$s'_i\sim t'_i$ for $i=1,\ldots ,n$.
Then $s_i\sim t_i$ for $i=1,\dots,n$.
\end{proof}

Finally, recall from \cite{JMZfactorizations}, Proposition 2
(i), (ii), the uniqueness of factorization (iv) and extraction (vi)
for a GCD-monoid.
It was formulated for a~GCD-domain, but the proof is valid for
a GCD-monoid.

\begin{pr}
\label{p67}
Let $H$ be a GCD-monoid.

\medskip

\noindent
{\bf a)}
For every $s_0,s_1,\dots,s_n,t_0,t_1,\dots,t_n\in\calS (H)$,
if
$$s_0s_1^2s_2^{2^2}\ldots s_n^{2^n}\sim
t_0t_1^2t_2^{2^2}\ldots t_n^{2^n},$$
then $s_i\sim t_i$ for $i=0,\dots,n$.

\medskip

\noindent
{\bf b)}
For every $a,c\in H$, $b,d\in \calS (H)$,
if
$$a^2b\sim c^2d,$$
then $a\sim c$ and $b\sim d$.
\end{pr}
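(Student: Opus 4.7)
The plan is to establish (b) first by exploiting the GCD structure, and then to deduce (a) by induction on $n$ using (b) as the one-step reduction.

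\medskip

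For (b), I would set $g = \gcd(a,c)$ and write $a = ga'$, $c = gc'$ with $a' \rpr c'$. Cancelling $g^2$ from $a^2 b \sim c^2 d$ reduces the problem to the coprime case $a'^2 b \sim c'^2 d$. The pivotal step is the escalation $a' \rpr c' \Rightarrow a'^2 \rpr c'^2$: since a GCD-monoid is a decomposition monoid, two applications of Lemma~\ref{l63}~b) yield this. From $c'^2 \mid a'^2 b$ together with $a'^2 \rpr c'^2$, Lemma~\ref{l63}~a) now gives $c'^2 \mid b$; since $b \in \calS(H)$, this forces $c' \in H^{\times}$. Symmetrically $a' \in H^{\times}$, so $a \sim c$, and cancelling the common factor from the original relation yields $b \sim d$.

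\medskip

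For (a), I would induct on $n$. The case $n = 0$ is trivial. For the inductive step, I group the higher powers by setting $A = s_1 s_2^2 \ldots s_n^{2^{n-1}}$ and $C = t_1 t_2^2 \ldots t_n^{2^{n-1}}$, so the hypothesis rewrites as $A^2 s_0 \sim C^2 t_0$ with $s_0, t_0 \in \calS(H)$. Part (b) then yields $s_0 \sim t_0$ and $A \sim C$; applying the inductive hypothesis to the relation $A \sim C$ (with indices shifted down by one) gives $s_i \sim t_i$ for $i = 1, \ldots, n$.

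\medskip

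The only delicate point is the coprimality escalation $a' \rpr c' \Rightarrow a'^2 \rpr c'^2$ in (b); everything else is a routine combination of the GCD calculus with the observation that a square-free element admits no non-invertible square divisor.
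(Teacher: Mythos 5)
Your proof is correct and complete. Note that the paper itself offers no argument for Proposition~\ref{p67}: it only asserts that the statement is Proposition~2~(i),~(ii) of \cite{JMZfactorizations}, formulated there for GCD-domains but valid for GCD-monoids. So there is no in-paper proof to compare against; what matters is that your argument stands on its own, and it does. Part b) uses only tools available in the paper: existence of $\gcd$ and cancellativity, Lemma~\ref{l63}~a),~b) (applicable since a GCD-monoid is a decomposition monoid by diagram (1.6)), and the observation that $c'^2\mid b$ with $b\in\calS(H)$ forces $c'\in H^{\times}$. The reduction of a) to b) via $A=s_1s_2^2\cdots s_n^{2^{n-1}}$ rests on the identity $s_0s_1^2s_2^{2^2}\cdots s_n^{2^n}=A^2s_0$, which is exactly how the paper passes between presentations (iv) and (vi) elsewhere, and the induction closes cleanly.
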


\section{Classifications of monoids with respect to
square-free factorizations}
\label{s7}

\renewcommand{\arraycolsep}{3pt}

In this section we show how to organize all the variety of cases
when properties considered in Proposition \ref{p34} hold or do not.
We would like to emphasize two advantages of this situations.
First: it yields mostly non-trivial questions about existence of
7, 19, 24, or even 60 monoids, respectively.
Second: it provides many ways of classifying monoids with respect
to possesing or not different square-free factorizations or
extractions, which may be more subtle than with respect to
irreducible factorizations.

\medskip

There are $7$ possible combinations of logical values
for properties (i)~--~(iv).
$$\begin{array}{|c|c|c|c|} \hline
{\rm (i)}&{\rm (ii)}&{\rm (iii)}&{\rm (iv)} \\ \hline \hline
+&+/- &+&+/- \\ \hline
+&-&-&+/- \\ \hline
-&-&-&- \\ \hline
\end{array}$$

\medskip

We would like to involve the following properties of monoids:
ACCP, atomicity, GCD, decomposition.
We introduce the value of "ACCP/atm" as follows.
$$\begin{array}{|c|c|c|c|c|c|c|c||c|} \hline
\text{ACCP}&\text{atm}&\text{ACCP/atm} \\ \hline \hline
+&+&2 \\ \hline
-&+&1 \\ \hline
-&-&0 \\ \hline
\end{array}$$

\smallskip

Similarly, we introduce the value of "GCD/decomp".
$$\begin{array}{|c|c|c|c|c|c|c|c||c|} \hline
\text{GCD}&\text{decomp}&\text{GCD/decomp} \\ \hline \hline
+&+&2 \\ \hline
-&+&1 \\ \hline
-&-&0 \\ \hline
\end{array}$$

\smallskip

Now, we can collect all possibilities for conditions (i) -- (vi)
in Proposition \ref{p34}, taking into account the properties
mentioned above.
By $1^{\ast}$ below we denote that $1$ as the value of "ACCP/atm"
is possible only when the value of "GCD/decomp" is $0$,
and also $1$ as the value of "GCD/decomp" is possible only when
the value of "ACCP/atm" is $0$.
In the leftmost column we indicate the number of cases for
"ACCP/atm" and "GCD/decomp" with respect to given values of
(i) -- (iv).
In the rightmost column we indicate the number of cases for
extractions (v) and (vi) also with respect to (i) -- (iv).

\smallskip

$$\begin{array}{|c|c|c||c|c|c|c||c|c|c|} \hline
\text{cases}&\text{ACCP/atm}&\text{GCD/decomp}&{\rm (i)}&{\rm (ii)}&
{\rm (iii)}&{\rm (iv)}&{\rm (v)}&{\rm (vi)}&\text{cases} \\
\hline \hline
6&2/1^{\ast}/0&2/1^{\ast}/0&+&+&+&+&+&+&1 \\ \hline
2&1/0&0&+&+&+&-&+&+/- &2 \\ \hline
3&2/1/0&0&+&-&+&+&+/- &+&2 \\ \hline
2&1/0&0&+&-&+&-&+/- &+/- &4 \\ \hline
4&2/1^{\ast}/0&1^{\ast}/0&+&-&-&+&+/- &+&2 \\ \hline
4&1^{\ast}/0&2/1^{\ast}/0&+&-&-&-&+/- &+/- &4 \\ \hline
3&0&2/1/0&-&-&-&-&+/- &+/- &4 \\ \hline
\end{array}$$

\bigskip

Let us extract possible combinations of (i) -- (iv) for:
atomic, ACCP, decomposition and GCD-monoids.
We have:
\begin{itemize}
\item
6 possible combinations for atomic monoids,
$$\begin{array}{|c|c|c|c|} \hline
{\rm (i)}&{\rm (ii)}&{\rm (iii)}&{\rm (iv)} \\ \hline \hline
+&+&+&+/- \\ \hline
+&-&+/-&+/- \\ \hline
\end{array}$$
\item
3 possible combinations for ACCP-monoids,
$$\begin{array}{|c|c|c|c|} \hline
{\rm (i)}&{\rm (ii)}&{\rm (iii)}&{\rm (iv)} \\ \hline \hline
+&+&+&+ \\ \hline
+&-&+/-&+ \\ \hline
\end{array}$$
\item
4 possible combinations for decomposition monoids,
$$\begin{array}{|c|c|c|c|} \hline
{\rm (i)}&{\rm (ii)}&{\rm (iii)}&{\rm (iv)} \\ \hline \hline
+&+&+&+ \\ \hline
+/-&-&-&- \\ \hline
+&-&-&+ \\ \hline
\end{array}$$
\item
3 possible combinations for GCD-monoids.
$$\begin{array}{|c|c|c|c|} \hline
{\rm (i)}&{\rm (ii)}&{\rm (iii)}&{\rm (iv)} \\ \hline \hline
+&+&+&+ \\ \hline
+/-&-&-&- \\ \hline
\end{array}$$
\end{itemize}

\smallskip

There are 24 classes of monoids with respect to properties:
\begin{center}
ACCP, atomicity, GCD, decomposition, (i) -- (iv).
\end{center}
The question if all of them are non-empty is, in our opinion,
of fundamental importance.

\medskip

Extraction (vi) is a basic tool for exploring properties of subrings
connected with square-free elements.
This is why we think it is reasonable to consider whole set
of properties (i) -- (vi).
There arises a question if all combinations of logical values
are possible, i.e., a question about 19 examples.

\medskip

There are 60 classes of monoids with respect to all properties:
\begin{center}
ACCP, atomicity, GCD, decomposition, (i) -- (vi).
\end{center}
We don't think that all of them are non-empty.
It may be true, e.g., that for ACCP-monoids there is
${\rm (ii)}\Leftrightarrow {\rm (v)}$.
Hence, we state a question about 60 examples of monoids.

\section{Some examples}
\label{s8}

\begin{ex}
\label{e81}
Put
$$B_{p,q}=\langle x_1,x_2,x_3,\ldots,y_1,y_2,y_3,\ldots\mid
y_i=x_{i+1}^py_{i+1}^q,i=1,2,3,\ldots\rangle,$$
where $p$, $q$ are positive integers.

Then $B_{p,q}$ is a non-factorial GCD-monoid for any $p$, $q$.

\medskip

\noindent
{\bf a)}
$B_{1,1}$ satisfies all conditions {\rm (i)} -- {\rm (vi)},
in particular, it is a non-atomic monoid satisfying {\rm (ii)},
mentioned in Remark~\ref{r35}.6.

\medskip

\noindent
{\bf b)}
if $q$ is even, then $B_{p,q}$ satisfies {\rm (vi)} and no one of
{\rm (i)} -- {\rm (v)},
in particular, it is a non-factorial GCD-monoid satisfying
$\text{\rm (vi)}\wedge\neg\text{\rm (i)}$ as well as
$\text{\rm (vi)}\wedge\neg\text{\rm (v)}$, mentioned in
Remark~\ref{r35}.1.

\medskip

\noindent
{\bf c)}
if $q$ is odd and $(p,q)\neq (1,1)$, then $B_{p,q}$ satisfies no one
of the conditions {\rm (i)} -- {\rm (vi)}.

\medskip

Monoid $B_{1,1}$ gives an important argument in the discussion
of how property {\rm (i)} extends atomicity in the context of
diagram (1.6):
\renewcommand{\arraycolsep}{4pt}
\renewcommand{\arraystretch}{0.2}
$$\begin{array}{rcl}
&& \text{BF}\;\;\Rightarrow\;\;\text{ACCP}\;\;\Rightarrow
\;\;\text{atomic}\;\;\Rightarrow
\;\;\text{(i)} \\
& \mbox{\begin{psfrags}\rotatebox{30}{$\Rightarrow$}\end{psfrags}}
 & \\
\text{factorial} && \\
& \mbox{\begin{psfrags}\rotatebox{-30}{$\Rightarrow$}\end{psfrags}}
 & \\
&& \text{GCD}\;\;\Rightarrow\;\;\text{decomposition}\;\;\Rightarrow
\;\;\text{atoms are primes}
\end{array}$$
\renewcommand{\arraystretch}{1.2}

\noindent 
Namely, we loose connection with the lower line of the diagram
since $B_{1,1}$ satisfies the strongest one -- {\rm GCD} --
and is not factorial, so in general the conjunction of {\rm (i)}
and {\rm GCD} does not imply factoriality.
\end{ex}

\begin{ex}
\label{e82}
Let ${\mathbb Q}_{\geq 0}$ denote the set of all non-negative
rational numbers.
$H=({\mathbb Q}_{\geq 0},+)$ is a GCD-monoid, because
$\gcd (a,b)=\min \{a,b\}$ for all $a,b\in H$.
It satisfies condition {\rm (vi)}, because for any $a\in H$
we have $a=\frac{a}{2}+\frac{a}{2}+0$ and $0\in \calS (H)$.
However it do not satisfy any of conditions {\rm (i)} -- {\rm (iv)},
because $\calS (H)=\{0\}$ and $0+\ldots +0\neq a$ for $a\neq 0$.
Neither condition {\rm (v)}, because if $c\in \calS (H)$,
then $c=0$ and $0+\ldots +0$ is not divisible in
$({\mathbb Q}_{\geq 0},+)$ by a non-zero $a$
(here $a\mid b$ iff $a\leq b$).
Clearly $H$ is also non-factorial.
\end{ex}

\begin{ex}
\label{e83}
For a non-negative integer $k$ we denote by
${\mathbb N}_{\geq k}$ the set
of integers greater or equal to $k$.
Then $H=({\mathbb N}_{\geq 2}\cup\{0\},+)$
is not a decomposition monoid, since $\calA (H)=\{2,3\}$
and $\calP (H)=\emptyset $. See also section \ref{s8}.
\end{ex}

\begin{ex}
\label{e84}
Let $L$ and $F$ be fields such that $L\subset F$.
Consider $T=L+xF[x]$.
Then the atoms of the ring $T$ are known:
\end{ex}

\begin{tw}{\em (\cite{AndAndZaf2}, Theorems 2.9 and 5.3)}.\\
$T$ is half-factorial domain and
$\calA (T)=\{ax, a\in F\}\cup
\{a(1+xf(x)),a\in L, f\in F[x], 1+xf(x)\in\calA (F[x])\}.$
\end{tw}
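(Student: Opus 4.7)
The plan is to organize the proof around the concrete structure of the subring $T=L+xF[x]$ inside the UFD $F[x]$. I would first compute the units: since $T\subset F[x]$ and $(F[x])^{\times}=F^{\times}$, an element of $T$ is invertible iff it is a nonzero constant lying in $L$, so $T^{\times}=L^{\times}$. Consequently every nonzero non-unit of $T$ has constant term either $0$ or in $L^{\times}$, and this dichotomy drives the two types of atoms listed in the statement.

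For atoms with zero constant term, I would write such an element as $p=xh$ with $h\in F[x]$ and show that $p$ factors nontrivially in $T$ as soon as $\deg h\geq 1$: if $h(0)=0$, then $h=xh'$ and $p=x\cdot xh'$ is already a factorization into non-units of $T$; if $h(0)\in F^{\times}$, then the decomposition $h=h(0)\bigl(1+x\tilde h\bigr)$ yields $p=\bigl(h(0)\,x\bigr)\bigl(1+x\tilde h\bigr)$, both factors being non-units of $T$. This forces $\deg h=0$, so the only atoms of this kind are $p=ax$ with $a\in F^{\times}$; conversely, each such $ax$ is an atom because in any $T$-factorization of a degree-one polynomial one factor would be a nonzero constant in $L$, hence in $L^{\times}=T^{\times}$.

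For atoms with constant term in $L^{\times}$, I would divide by the unit constant and reduce to the case $p=1+xf$. The key claim is that $1+xf$ is an atom of $T$ iff it is irreducible in $F[x]$. The forward direction is immediate: a $T$-factorization is a fortiori an $F[x]$-factorization in which both factors have positive degree, since their constant terms multiply to $1\in L^{\times}$ and so neither factor is a unit of $L$. The converse requires a rescaling step: given $1+xf=PQ$ in $F[x]$ with $P,Q$ non-constant, we have $P(0),Q(0)\in F^{\times}$ with $P(0)Q(0)=1$, so replacing $P$ by $Q(0)P$ and $Q$ by $P(0)Q$ produces factors of $T$ (each with constant term $1\in L$) whose product is still $1+xf$. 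I expect this rescaling argument to be the subtlest point of the proof, but it is not deep once spotted.

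For the half-factoriality statement, I would use unique factorization in $F[x]$ to write a nonzero non-unit $g\in T$ as $g=c\,x^r p_1\cdots p_s$ with $c\in F^{\times}$, $r\geq 0$, and each $p_i\in\calA(F[x])$ normalized so that $p_i(0)=1$. Any atomic factorization of $g$ in $T$ must then consist, up to reordering, of exactly $r$ Type~I atoms $a_ix$ (accounting for the $x^r$-part) and $s$ Type~II atoms of the form $b_ip_i$ (each irreducible factor of $g/x^r$ in $F[x]$ being realised, up to an $L^{\times}$-unit, by one Type~II atom), so its length is always $r+s$. Existence of such a factorization comes from distributing the scalar $c$: when $r\geq 1$ it can be absorbed freely among the Type~I coefficients $a_i\in F^{\times}$, and when $r=0$ one has $c=g(0)\in L^{\times}$, so $c$ can be absorbed into any one of the $b_i\in L^{\times}$.
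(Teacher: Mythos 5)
The paper does not prove this statement at all: it is quoted verbatim from \cite{AndAndZaf2} (Theorems 2.9 and 5.3) as a known result about the atoms and half-factoriality of $T=L+xF[x]$, so there is no internal proof to compare against. Your self-contained argument is, as far as I can check, correct and complete: the computation $T^{\times}=L^{\times}$, the dichotomy on the constant term, the elimination of $xh$ with $\deg h\geq 1$ via the two sub-cases $h(0)=0$ and $h(0)\neq 0$, the rescaling trick $PQ=(Q(0)P)(P(0)Q)$ identifying the atoms with constant term in $L^{\times}$ as the $L^{\times}$-multiples of the $F[x]$-irreducibles normalized to constant term $1$, and the length count $r+s$ via unique factorization in $F[x]$ all go through. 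Two cosmetic remarks: you have swapped the labels ``forward'' and ``converse'' in the key claim (the a-fortiori argument shows that $F[x]$-irreducibility implies being an atom of $T$, while the rescaling shows the reverse implication), but both implications are actually proved; and you correctly restrict to $a\in F\setminus\{0\}$ and $a\in L\setminus\{0\}$ where the quoted statement sloppily writes $a\in F$ and $a\in L$.
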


We can also determine all the square-free elements of $T$:

\begin{pr}
\label{p89}
Let $T=L+xF[x]$, and $f\in F[x]$.
Then $f\in\calS (T)$ iff $f\in\calS (F[x]) \wedge f(0)\in L$.
\end{pr}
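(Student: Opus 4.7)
The plan is to prove both implications of the equivalence separately. For the direction $f\in\calS(F[x])\wedge f(0)\in L\Rightarrow f\in\calS(T)$, I would argue by contrapositive: $f(0)\in L$ places $f\in T$, and the crucial observation is that $T^\times=L^\times$ consists precisely of the nonzero constant polynomials in $T$; hence any $b\in T\setminus T^\times$ is non-constant and therefore also a non-unit in $F[x]$. A decomposition $f=b^2c$ in $T$ with $b\notin T^\times$ would then contradict $f\in\calS(F[x])$, giving the desired implication.

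For the converse $f\in\calS(T)\Rightarrow f\in\calS(F[x])\wedge f(0)\in L$, the conjunct $f(0)\in L$ is immediate from $\calS(T)\subset T$. For $f\in\calS(F[x])$ I would again argue by contrapositive: given $f\in T$ and a factorization $f=g^2h$ in $F[x]$ with $g\in F[x]\setminus F^\times$, construct $\tilde g\in T\setminus T^\times$ and $\tilde h\in T$ with $f=\tilde g^2\tilde h$. The construction splits on $g(0)$. When $g(0)\neq 0$, the rescalings $\tilde g:=g/g(0)$ and $\tilde h:=g(0)^2 h$ both land in $T$ since $\tilde g(0)=1\in L$ and $\tilde h(0)=g(0)^2h(0)=f(0)\in L$; and $\tilde g$ is non-constant because $g$ is, hence $\tilde g\in T\setminus T^\times$. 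When $g(0)=0$, write $f=x^m f_0$ with $f_0(0)\neq 0$; since $g^2\mid f$ we have $m\geq 2$. If $m\geq 3$, then $\tilde g:=x$ and $\tilde h:=f/x^2=x^{m-2}f_0$ satisfy $\tilde h(0)=0\in L$, so both lie in $T$.

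The main obstacle is the boundary case $g(0)=0$ together with $v_x(f)=2$: writing $g=x g_0$ with $g_0(0)\neq 0$ forces $f=x^2 g_0^2 h$, and the naive $\tilde g:=x$ yields $\tilde h=g_0^2 h=f_0$, whose constant term lies in $F$ and may fail to lie in $L$. To handle this I would apply the Case~A rescaling to $g_0$ itself, setting $\tilde g:=x\cdot (g_0/g_0(0))\in T$ (non-constant, with constant term $0\in L$) and $\tilde h:=g_0(0)^2 h$, so that $f=\tilde g^2\tilde h$. The delicate and crucial step is then to verify $\tilde h(0)=g_0(0)^2 h(0)\in L$; I expect this to be the technical heart of the argument, requiring one to combine the hypothesis $f\in T$ with the specific structure of the given $F[x]$-factorization to control the coefficient $(f/x^2)(0)\in F$ and ensure it lies in $L$.
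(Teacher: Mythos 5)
Your reduction is correct up to the last sub-case, and your instinct that the configuration $g(0)=0$ with $v_x(f)=2$ is where the real difficulty sits is exactly right --- but the gap you leave open there cannot be closed in general, because the proposition is false for an arbitrary field extension $L\subset F$. Take $L=\mathbb{Q}$, $F=\mathbb{Q}(t)$, $T=\mathbb{Q}+x\mathbb{Q}(t)[x]$ and $f=tx^2$. Then $f\in T$, $f(0)=0\in L$, and $f=x^2\cdot t\notin\calS(F[x])$, so the statement predicts $f\notin\calS(T)$. However, in any presentation $f=b^2c$ with $b,c\in T$ and $b\notin T^\times=L^\times$, the element $b$ must be non-constant (nonzero constants of $T$ lie in $L^\times$), so comparing degrees gives $\deg b=1$ and $c\in\mathbb{Q}^\times$; comparing constant terms gives $b=\beta x$, and then $\beta^2=t/c$, which is impossible since $t/c$ has odd $t$-adic valuation. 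Hence $tx^2\in\calS(T)\setminus\calS(F[x])$. In your notation, the obstruction is precisely that $\tilde h(0)=(f/x^2)(0)$ is an arbitrary element of $F^\times$, and replacing $\tilde g$ by $\beta\tilde g$ to repair it amounts to requiring $(f/x^2)(0)\in L^\times\cdot(F^\times)^2$, which fails whenever $F^\times\neq L^\times\cdot(F^\times)^2$ (the same happens for $\mathbb{F}_3\subset\mathbb{F}_9$ with $f=\alpha x^2$, $\alpha$ a non-square).

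You should also know that the paper's own argument has exactly the hole you detected: it writes $g=a_nx^n+\ldots+a_1x+a_0$ and divides by $a_0=g(0)$ without considering the possibility $a_0=0$. Your Case A and your sub-case $m\geq 3$ are correct and coincide with what the paper does where its computation is valid. The remaining sub-case is not a step you failed to find but a genuine counterexample to the proposition as stated; the statement (and both proofs) become correct under the additional hypothesis $F^\times=L^\times\cdot(F^\times)^2$ --- satisfied, e.g., when every element of $F$ is a square, and in particular for $\mathbb{R}\subset\mathbb{C}$, the only case the paper subsequently uses --- since then one may take $\tilde g=\beta x$ with $(f/x^2)(0)/\beta^2\in L$.
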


\begin{proof}
Suppose that $f\notin\calS (F[x])$ or $f(0)\notin L$.
If $f(0)\notin L$, then $f\notin T$, so $f\notin\calS (T)$.
Now, assume that $f\notin\calS (F[x])$.
Then $f=g^2h$, where $g\in F[x]\setminus F, h\in F[x]$.
Let $g=a_nx^n+\ldots +a_1x+a_0, h=b_mx^m+\ldots +b_1x+b_0$.
We have $f=(a_nx^n+\ldots +a_1x+a_0)^2(b_mx^m+\ldots +b_1x+b_0)$.
Then
$f=\Big(\dfrac{a_n}{a_0}x^n+\ldots + \dfrac{a_1}{a_0}x+1\Big)^2
(b_ma_0^2x^m+\ldots +b_1a_0^2x+b_0a_0^2)$,
where $b_0a_0^2=f(0)\in L$.

Now, suppose that $f\notin\calS (T)$.
If $f\notin T$, then $f(0)\notin L$.
Now, assume that $f\in T$.
Then we have $f=g^2h$, where $g\in T\setminus L, h\in T$.
This implies $g\in F[x]\setminus F, h\in F[x]$.
\end{proof}

In particular, if $T=\mathbb{R}+x\mathbb{C}[x]$, then\\
$\calA (T)=\{ax, a\in\mathbb{C}\}\cup
\{a(1+bx), a\in\mathbb{R}, b\in\mathbb{C}\setminus\{0\}\}$\\
$\calS (T)=\{1\}\cup\{ax, a\in\mathbb{C}\}\cup
\{a(1+bx), a\in\mathbb{R}, b\in\mathbb{C}\setminus\{0\}\}\cup
\{x(a+bx), a\in\mathbb{R}, b\in\mathbb{C}\setminus\{0\}\}$.

\smallskip

Using Proposition \ref{p89} we easily verify that $L+xF[x]$ fulfills
{\rm (i)} -- {\rm (vi)}.

\smallskip
If $F$ and $L$ are finite fields and it is a proper extension,
then $L+xF[x]$ is a non-factorial ACCP domain
(see \cite{AndAndZaf1}, \cite{EftekhariKhorsandi}).

\section{The number of square-free elements of a reduced monoid}
\label{s9}

It is obvious that an arbitrary non-negative integer can be
the number of atoms of a monoid.
For example it can be the number of its free generators.
In a group every element is square-free, since there is no
non-invertible element.
Hence, any positive integer can be the number of square-free
elements of a monoid.
It is not such obvious, but still true, that an arbitrary positive
integer can be the number of square-free elements of a reduced
monoid.
It also remains valid if we assume that this reduced monoid is
cancellative.

For integers $a,b$ such that $a\geq b$, we define
$[a,b]=\{c\in {\mathbb Z}; a\leq c\leq b\}$, that is,
the set of all consecutive integers from $a$ to $b$.

\begin{tw}
\label{t91}
Let $n$ be a positive integer.
Then there exists a reduced cancellative monoid $H$ such that
$\#\calS (H)=n$.
\end{tw}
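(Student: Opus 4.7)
The strategy is to split into cases by $n$ and exhibit an explicit reduced cancellative monoid in each case. The smallest cases are immediate: take $H=\{0\}$ for $n=1$, and $H=(\mathbb{N},+)$ for $n=2$, since every $k\geq 2$ equals $2\cdot 1+(k-2)$ and hence $\calS(\mathbb{N})=\{0,1\}$.

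For even $n=2a$ with $a\geq 1$ I will take the submonoid
\[
H\;=\;\{0,a\}\cup\{k\in\mathbb{N}:k\geq 2a\}
\]
of $(\mathbb{N},+)$; closure is immediate from $a+a=2a$. The count $\#\calS(H)=2a$ follows from a short interval case analysis: since $H\cap[1,2a-1]=\{a\}$, the only available witness for non-square-freeness in the range $k<4a$ is $b=a$, which leaves $2a$ survivors (including $0$ and $a$). For odd $n=2a-3$ with $a\geq 4$ (so $n\geq 5$) I take
\[
H\;=\;\{0,a,a+1\}\cup\{k\in\mathbb{N}:k\geq 2a\},
\]
also closed. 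The same method, now with $b\in\{a,a+1\}$ paired against $c\in\{a,a+1\}$, identifies the non-square-free elements as $\{2a,2a+2\}$ in $[2a,3a-1]$ and $\{3a,3a+1,3a+2,3a+3\}$ in $[3a,4a-1]$; every $k\geq 4a$ is non-square-free via $k=2a+(k-2a)$. This yields $\#\calS(H)=3+(a-2)+(a-4)=2a-3=n$.

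The genuinely delicate case is $n=3$. By Lemma~\ref{l31}, a non-invertible square-free element has only trivial divisors and is therefore an atom, so $\#\calS(H)=3$ forces $\#\calA(H)\leq 2$. But an atomic reduced cancellative monoid with at most two atoms has torsion-free Grothendieck group (otherwise some atom would be invertible), so up to isomorphism it is $\{0\}$, $\mathbb{N}$, $\mathbb{N}^2$, or a two-generated numerical monoid $\langle m,n\rangle$ with $m,n\geq 2$ and $\gcd(m,n)=1$; the first three give $\#\calS\in\{1,2,4\}$, and in the last $\{0,m,n,m+n\}\subseteq\calS$, so $\#\calS\geq 4$. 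A non-atomic $H$ is therefore unavoidable. I will use
\[
H\;=\;\bigl\{(u,v)\in\mathbb{Z}[\tfrac{1}{2}]_{\geq 0}^{\,2}:u-v\in\mathbb{Z}\bigr\}\;\subset\;(\mathbb{Q}_{\geq 0}^{\,2},+),
\]
a submonoid of the torsion-free abelian group $(\mathbb{Q}^2,+)$, hence cancellative, and clearly reduced. The key point is that for $(u,v)\in H$ with both $u,v>0$, the element $(2^{-k},2^{-k})\in H\setminus\{(0,0)\}$ witnesses non-square-freeness once $k$ is large enough that $2^{1-k}\leq\min(u,v)$; while $(u,0)\in H$ forces $u\in\mathbb{N}$ and is square-free iff $u\leq 1$ (by symmetry also $(0,v)$). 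Hence $\calS(H)=\{(0,0),(1,0),(0,1)\}$. The main obstacle is precisely this $n=3$ case, because naive atomic constructions cannot drop the $\calS$-count below $4$; the other regimes reduce to routine interval counting.
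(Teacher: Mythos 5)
Your constructions for $n=1$, $n=2$, even $n=2a$, and odd $n=2a-3\geq 5$ are correct and closely parallel the paper's own: the paper uses $H=\mathbb{N}_{\geq 2m}\cup\{0\}\cup\{m\}$ for $n=2m$ and $H=\mathbb{N}_{\geq 2m-1}\cup\{0\}\cup\{m\}$ for $n=2m-1$, whereas you realize the odd values via $\{0,a,a+1\}\cup\mathbb{N}_{\geq 2a}$; both versions reduce to the same routine interval counting, and your counts check out. The genuine divergence is $n=3$. The paper takes $H=\mathbb{N}_{\geq 2}\cup\{0\}$, but that monoid actually has \emph{four} square-free elements, namely $\{0,2,3,5\}$ (writing $5=2b+c$ with $b\geq 2$ forces $c=1\notin H$), so your instinct that $n=3$ is the delicate case is vindicated, and your non-atomic example $H=\{(u,v)\in\mathbb{Z}[\tfrac{1}{2}]_{\geq 0}^{2}:u-v\in\mathbb{Z}\}$ is a correct replacement: it is a reduced submonoid of a torsion-free group, the witness $(2^{-k},2^{-k})$ kills every element with both coordinates positive, and elements on the axes are forced to have integer coordinates, giving $\calS(H)=\{(0,0),(1,0),(0,1)\}$. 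This part of your argument is a genuine improvement over the paper's proof as written.

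Two caveats about the surrounding discussion in your $n=3$ paragraph, neither of which affects the validity of the existence proof. First, Lemma~\ref{l31} does not say that a square-free non-unit has only trivial divisors and hence is an atom --- in $(\mathbb{Z}_{>0},\cdot)$ the element $6$ is square-free but not an atom; the bound $\#\calA(H)\leq 2$ follows simply from $\calA(H)\subseteq\calS(H)\setminus H^{\times}$. Second, your classification of atomic reduced cancellative monoids with at most two atoms is incomplete: the quotient group can have torsion without any atom becoming invertible, e.g.\ $H=\{(0,0)\}\cup\{(m,\varepsilon):m\geq 1,\ \varepsilon\in\mathbb{Z}/2\mathbb{Z}\}\subset\mathbb{Z}\times\mathbb{Z}/2\mathbb{Z}$ is reduced, cancellative and atomic with exactly two atoms. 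Consequently your claim that $n=3$ \emph{cannot} be realized by an atomic monoid is not actually proved here. Since the theorem only asks you to exhibit one example, this is a flaw in the motivation rather than a gap in the proof, but you should either delete that reasoning or state it as a heuristic.
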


\begin{proof}
Let $m$ be an integer $\geq 2$. Consider a monoid
$$H={\mathbb N}_{\geq 2m}\cup \{0\}\cup \{m\}$$
with the operation of addition.

Clearly $\calA (H)=\{m\}\cup [2m+1,3m-1]$ and $\#\calA (H)=m$.
Then $\calS (H)=\{0,m\} \cup [2m+1,3m-1]\cup [3m+1,4m-1]$
and consequently $\#\calS (H)=2m$.

Now let $m$ be an integer $\geq 3$ and consider a monoid
$$H={\mathbb N}_{\geq 2m-1}\cup \{0\}\cup \{m\}.$$
In this case $\calA (H)=\{m,2m-1\}\cup [2m+1,3m-2]$ and
$\#\calA (H)=m$.
Then $\calS (H)=\{0,m,2m-1\} \cup [2m+1,3m-1]\cup [3m+1,4m-3]$
and finally $\#\calS (H)=2m-1$.

So far we have proved the assertion for $n\geq 4$.
If $n=1$ we can take $H=\{0\}$.
If $n=2$ we may consider $H={\mathbb N}_{\geq 0}$.
If $n=3$ we can take $H={\mathbb N}_{\geq 2}\cup \{0\}$.
\end{proof}

Note that the proof could not be based solely on the monoids
of the form $H_k={\mathbb N}_{\geq k}\cup \{0\}$,
because $\#\calS (H_k)$ grows faster than $k$.

\end{document}